\newtheorem{theorem}{Theorem}
\newtheorem{corollary}[theorem]{Corollary}
\newtheorem{lemma}[theorem]{Lemma}
\newtheorem{proposition}[theorem]{Proposition}
\theoremstyle{definition}
\newtheorem{example}[theorem]{Example}
\newtheorem{definition}[theorem]{Definition}
\newtheorem{remark}[theorem]{Remark}
\newcommand{\rank}{\operatorname{rk}}
\newcommand{\im}{\operatorname{im}}
\newcommand{\Sh}{\operatorname{Sh}}
\newcommand{\keywords}{\noindent \textit{Keywords:} }
\newcommand{\MSC}{\noindent \textit{2010 MSC:} }
\begin{document}

\title{Regular blocks and Conley index of isolated invariant continua in surfaces
\footnote{The author is  supported by the FPI grant BES-2013-062675 and by MINECO (MTM2012-30719).} }

\author{H\'ector Barge\footnote{Facultad de C. C. Matem\'aticas, Universidad Complutense de Madrid, Madrid 28040, Spain; email:hbarge@ucm.es}}

\date{}

\maketitle


\begin{abstract}
In this paper we study topological and dynamical features of isolated invariant continua of continuous flows defined on surfaces.  We show that near an isolated invariant continuum the flow is topologically equivalent to a $C^1$ flow. We deduce that  isolated invariant continua in surfaces have the shape of finite polyhedra. We also show the existence of  \emph{regular isolating blocks} of isolated invariant continua and we use them to compute their Conley index provided that we have some knowledge about the truncated unstable manifold. We also see that the ring structure cohomology index of an isolated invariant continuum in a surface determines its Conley index. In addition, we study the dynamics of non-saddle sets, preservation of topological and dynamical properties by continuation and we give a topological classification of isolated invariant continua which do not contain fixed points and, as a consequence, we also classify isolated minimal sets.  
\end{abstract}

\MSC{34C45, 37G35, 58J20}
\vspace{3pt}

\keywords{Conley index, Regular isolating block, Unstable manifold, fixed point, Minimal set, Non-saddle set.}

\section{Introduction}

In this paper we study topological and dynamical features of isolated invariant continua of continuous flows $\varphi :M\times \mathbb{R}%
\rightarrow M$ defined on surfaces. By a surface $M$ we mean a connected 2-manifold without boundary. To avoid trivial cases, when we refer to an isolated invariant continuum $K$, it will be implicit that it is a proper subset of $M$, i.e. $\emptyset\neq K\subsetneq M$. 

The paper is structured as follows. In Section~\ref{sec:1} we show that near an isolated invariant continuum $K$ the flow is topologically equivalent to a $C^1$ flow and, as a consequence, $K$ admits a basis of neighborhoods comprised of what we call \emph{isolating block manifolds}. The main result of this section is Theorem~\ref{polyhedron} which establishes that an isolated invariant continuum $K$ of a flow on a surface must have the shape of a finite polyhedron.  Besides, we characterize the \emph{initial sections} of the truncated unstable manifold $W^{u}(K)-K$ introduced in \cite{bargeunstable2014}.  Section~\ref{sec:2} is devoted to prove the main results of the paper which are Theorem~\ref{blockbasis} where the existence of the so-called \emph{regular isolating blocks} of isolated invariant continua on surfaces is established and Theorem~\ref{main} which establishes a complete classification of the possible values taken by the Conley index of $K$. In particular, it is proven that the Conley of $K$ is the pointed homotopy type of a wedge of circumferences if $K$ is neither an attractor nor a repeller, the pointed homotopy type of a disjoint union of a wedge of circumferences and an external point (which is the base point) if $K$ is an attractor and the pointed homotopy of a wedge of circumferences and a closed surface if $K$ is a repeller. Both the number of circumferences in the wedge and the corresponding genus of the surface in the case of repellers are determined by the first Betti number of $K$  and the knowledge of an initial section of $W^u(K)-K$. The existence of regular isolating blocks plays a key role in our proof of this classification. In Section~\ref{sec:3} we prove Theorem~\ref{ring} which is a classification of the Conley index of $K$ in terms of  the ring structure of the cohomology index. Finally, Section~\ref{sec:4} is devoted to some applications of the previous results. The main results of this section are Theorem~\ref{cont2} and Theorem~\ref{fixed}. Theorem~\ref{cont2} studies the preservation of some topological and dynamical properties by continuation. For instance, it is proven that if $(K_\lambda)_{\lambda\in I}$ is a continuation of an attractor (resp. repeller) $K_0$ then, for each $\lambda$, $K_\lambda$ must have a component $K^1_\lambda$ which is an attractor (resp. repeller) with the same shape of $K_0$. It is also proven that the property of being saddle is preserved by continuation for small values of the parameter and that if $K_\lambda$ is a continuum for each $\lambda$, the property of being non-saddle is preserved if and only if the shape is preserved. On the other hand, Theorem~\ref{fixed} establishes that if an isolated invariant continuum in a surface does not have fixed points it must be non-saddle and either a limit cycle, a closed annulus bounded by two limit cycles or a M\"obius strip bounded by a limit cycle.  A nice consequence of this result is Corollary~\ref{minimal} which establishes that a minimal isolated invariant continuum in a surface must be either a fixed point or a limit cycle.    

We shall use through the paper the standard notation and terminology in the
theory of dynamical systems. By the \textit{omega-limit }of a set $X\subset M$ \ we understand the set $\omega (X)=\bigcap_{t>0}
\overline{X\cdot \lbrack t,\infty)}$ while the \textit{negative omega-limit} is the
set $\omega ^{\ast }(X)=\bigcap_{t>0}\overline{X\cdot (-\infty ,-t]}$. The \emph{unstable manifold} of an invariant compactum $K$ is defined as the set $W^u(K)=\{x\in M \mid \emptyset\neq\omega^*(x)\subset K\}$. Similarly the \emph{stable manifold} $W^s(K)=\{x\in M\mid \emptyset\neq\omega(x)\subset K\}$.
For us, an \textit{attractor} is an \textit{asymptotically stable}
compactum and a \textit{repeller} is an asymptotically stable compactum for the reverse flow. 

We shall assume in the paper some knowledge of the Conley index theory of
isolated invariant compacta of flows. These are compact invariant sets $K$
which possess a so-called isolating neighborhood, that is, a compact
neighborhood $N$ such that $K$ is the maximal invariant set in $N$, or
setting

\[
N^{+}=\{x\in N:x[0,+\infty )\subset N\};\text{ \ \ \ }N^{-}=\{x\in
N:x(-\infty ,0]\subset N\}; 
\]

such that $K=N^{+}\cap N^{-}$. We shall make use of a special type of
{iso\-la\-ting} neighborhoods, the so-called isolating blocks, which have good
topological properties. More precisely, an isolating block $N$ is an
isolating neighborhood such that there are compact sets $N^{i},N^{o}\subset
\partial N$, called the entrance and exit sets, satisfying
\begin{enumerate}
\item $\partial N=N^{i}\cup N^{o}$,

\item for every $x\in N^{i}$ there exists $\varepsilon >0$ such that $%
x[-\varepsilon ,0)\subset M-N$

and for every $x\in N^{o}$ there exists $\delta >0$ such that $%
x(0,\delta ]\subset M-N$,

\item for every $x\in \partial N-N^{i}$ there exists $\varepsilon >0$ such that $%
x[-\varepsilon ,0)\subset \mathring{N}$

and for every $x\in \partial N-N^{o}$ there exists $\delta >0$ such
that $x(0,\delta ]\subset \mathring{N}$.
\end{enumerate}

These blocks form a neighborhood basis of $K$ in $M$. Associated to an isolating block $N$ there are defined two continuous functions
\[
t^s:N-N^+\to[0,+\infty),\quad t^i:N-N^-\to(-\infty,0]
\]
given by
\[
t^s(x):=\sup\{t\geq 0\mid x[0,t]\subset N\},\quad t^i(x):=\inf\{t\leq 0\mid x[t,0]\subset N\}.
\]
These functions are known as the \emph{exit time} and the \emph{entrance time} respectively.
We shall also use the
notation $n^{+}=N^{+}\cap \partial N$ and $n^{-}=N^{-}\cap \partial N$. If the flow is differentiable, the isolating blocks can be chosen to be
manifolds which contain $N^{i}$ and $N^{o}$ as submanifolds
of their boundaries and such that $\partial N^{i}=\partial N^{o}=N^{i}\cap
N^{o}$. This kind of isolating blocks will be called \emph{isolating block manifolds}. For flows defined on surfaces, the exit set $N^{o}$ of an isolating block manifold
is the disjoint union of a finite number of intervals $J_{1},\ldots,J_{m}$ and
circumferences $C_{1},\ldots,C_{n}$ and the same is true for the entrance set $%
N^{i}$. 

 We also recall some dynamical concepts introduced in \cite{bargeunstable2014}. If $K$ is an isolated invariant set, the subset $W^u(K)-K$ will be referred as the \emph{truncated unstable manifold} of $K$. A compact section $S$ of $W^u(K)-K$, i.e., a compact subset $S$ of $W^u(K)-K$ such that for each $x\in W^u(K)-K$ there exists a unique $t\in\mathbb{R}$ such that $xt\in S$, is said to be \emph{initial} provided that $\omega^*(S)\subset K$. Notice that, if $S$ and $S'$ are two different initial sections they are homeomorphic. The subset $I_S^u(K)=S(-\infty,0]$ is the so-called $S$-\emph{initial part of the truncated unstable manifold} $W^u(K)-K$ and it turns out to be homeomorphic to the product $S\times (-\infty,0]$. For instance, if $N$ is an isolating block, $n^-$ is an initial section and $N^-$ agrees with $I^u_{n^-}(K)$.  We would like to point out that all this concepts may be dualized for the stable manifold in the obvious way.  
 
We shall also make use of a classical result of C. Guti\'{e}rrez about smoothing of 2-dimensional flows.

\begin{theorem}[Guti\'{e}rrez \cite{Gutierrez}] Let $\varphi :M\times \mathbb{R}\rightarrow M$ be a continuous flow on a compact $C^{\infty }$ two-manifold $M$. Then there
exists a $C^{1}$ flow $\psi $ on $M$ which is topologically equivalent to $\varphi $. Furthermore, the following conditions are equivalent:
\begin{enumerate}
\item any minimal set of $\varphi $ is trivial;

\item $\varphi $ is topologically equivalent to a $C^{2}$ flow;

\item $\varphi $ is topologically equivalent to a $C^{\infty }$ flow.
\end{enumerate}
\end{theorem}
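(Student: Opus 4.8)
The plan is to establish the $C^{1}$-smoothing first and then read off the chain of equivalences from it together with classical obstruction theorems. For the smoothing, I would begin with the local structure of $\varphi$. At every point $x\notin\operatorname{Fix}(\varphi)$ the classical flow-box theorem for continuous flows provides a neighborhood on which $\varphi$ is conjugate to a translation flow, so the orbit structure on $M-\operatorname{Fix}(\varphi)$ is a continuous one-dimensional foliation, which on a surface can be straightened to a $C^{\infty}$ foliation. Hence the two genuine sources of non-smoothness are (a) the fixed point set $\operatorname{Fix}(\varphi)$, which a priori may be an arbitrary closed subset of $M$, and (b) the global way in which orbits accumulate on nontrivial recurrence.

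To deal with (a), note that $\operatorname{Fix}(\varphi)$ being closed, a Whitney-type theorem yields a $C^{\infty}$ function $f\colon M\to[0,\infty)$ with $f^{-1}(0)=\operatorname{Fix}(\varphi)$. I would then produce a continuous vector field $X$ whose flow is a reparametrization of $\varphi$ (hence topologically equivalent to it), smooth $X$ on $M-\operatorname{Fix}(\varphi)$ using the foliation analysis above together with a partition of unity, and finally replace $X$ by $f^{k}X$ for a suitable $k$ so that the field becomes $C^{1}$ and vanishes exactly on $\operatorname{Fix}(\varphi)$ without altering the unparametrized orbits off the fixed set. The care here lies in matching the local pictures along orbits that accumulate on $\operatorname{Fix}(\varphi)$, which is handled through the analysis of the canonical (parallelizable) regions of the flow.

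Issue (b) is, I expect, the main obstacle. Nontrivial minimal sets of a continuous surface flow are, up to topological equivalence, governed by circle homeomorphisms via Poincaré first-return maps and suspension constructions (Denjoy/Cherry-type pictures), and such homeomorphisms are realized as return maps of $C^{1}$ flows by the $C^{1}$ Denjoy examples. Replacing $\varphi$ near such minimal sets by these $C^{1}$ models and gluing them to the smooth flow-box pieces would give a global $C^{1}$ vector field whose flow is topologically equivalent to $\varphi$. The delicate point—and the technical heart of the argument—is to carry out this globalization while preserving the topological equivalence type across the separatrix configuration and near $\operatorname{Fix}(\varphi)$; this is where the bulk of the work goes.

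For the equivalences, $(3)\Rightarrow(2)$ is immediate, and $(2)\Rightarrow(1)$ follows because a topological equivalence carries minimal sets to homeomorphic minimal sets, while Schwartz's generalization of the Poincaré--Bendixson theorem forbids exceptional minimal sets for $C^{2}$ flows on compact surfaces; thus every minimal set of $\varphi$ is a rest point, a closed orbit, or (when $M$ is a torus) the whole surface, i.e.\ trivial. Finally, for $(1)\Rightarrow(3)$ I would revisit the construction of the first part: the only obstruction to pushing the smoothing beyond $C^{1}$ is precisely the Denjoy-type minimal sets of step (b); once these are excluded, the recurrence of $\varphi$ consists only of rest points and closed orbits (together with the minimal-torus case, which is linear and already $C^{\infty}$), so all the local models can be taken $C^{\infty}$ and the gluing produces a $C^{\infty}$ flow topologically equivalent to $\varphi$.
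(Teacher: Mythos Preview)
The paper does not prove this theorem: it is quoted verbatim as ``a classical result of C.\ Guti\'errez'' with a citation to \cite{Gutierrez}, and no argument is given beyond the statement. There is therefore nothing in the paper to compare your proposal against.

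As to the proposal itself, what you have written is a plausible high-level outline of the strategy in Guti\'errez's original paper, and the chain $(3)\Rightarrow(2)\Rightarrow(1)$ via Schwartz's theorem is exactly right. However, the part you flag as ``the technical heart of the argument''---gluing the $C^{1}$ Denjoy models near nontrivial minimal sets to the smooth flow-box pieces while preserving the global topological equivalence class and the separatrix structure---is precisely where all the genuine difficulty lies, and your sketch does not indicate how this is done. In Guti\'errez's proof this occupies the bulk of the work and uses a detailed structural analysis of the recurrent orbits (the decomposition into quasiminimal sets, the Cherry--Maier theory of recurrence on surfaces, and careful control of the return maps on transversals). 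Saying that one ``replaces $\varphi$ near such minimal sets by these $C^{1}$ models and glues'' is not yet a proof; the compatibility of the glued pieces with the existing orbit structure outside is highly nontrivial. Similarly, the claim that a continuous one-dimensional foliation on a surface ``can be straightened to a $C^{\infty}$ foliation'' is itself a substantial fact that requires justification. So your outline identifies the right ingredients, but as written it is a summary of the architecture rather than a proof.
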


By a trivial minimal set we understand a fixed point, a closed trajectory or the whole manifold if $M$ is the $2$-dimensional torus and $\varphi$ is topologically equivalent to an irrational flow.

We adopt in the paper a topological viewpoint close to the one adopted, for example, in the papers \cite{JaimeNLA,GabitesTrans,SanjurjoJDE}. Homotopy and homology theory play an important role dealing with the Conley index theory. In particular, we shall  use through the paper \v Cech cohomology $\check{H}^*$ and singular homology $H_*$ and cohomology $H^*$, all of them with $\mathbb{Z}_2$ coefficients. We recall that \v Cech and singular homology theories agree when working with spaces with good local behaviour such as manifolds, polyhedra and CW-complexes. We define the $i$-dimensional Betti number of a topological space $X$, $\beta_i(X)$ as the dimension of the vector space $\check{H}^i(K)$. The Euler characteristic $\chi(X)$, when defined, is the alternated sum of the Betti numbers. The 
\textit{Conley index} $h(K)$ of an isolated invariant set $K$ is defined as
the homotopy type of the pair $(N/N^{o},[N^{o}]),$ where $N$ is any
isolating block of $K$. A crucial fact concerning the definition is, of
course, that this homotopy type does not depend on the particular choice of $%
N$. We will also make use of the \emph{cohomology index} $CH^*(K)$ defined as $H^*(N/N^o,[N^o])\cong H^*(N,N^o)$. We refer the reader to \cite{Conley_mono, Conley_Isolated_1971, Mischaikow, Salamon} for information about the Conley index theory and to \cite{SanjurjoJDE, WojcikNLA} to see recent applications of the Conley index techniques to some problems in ecology.
 There is a form of homotopy which has proved to be the most convenient for the study of the global
{to\-po\-lo\-gi\-cal} properties of the invariant spaces involved in dynamics, namely
the \textit{shape theory} introduced and studied by Karol Borsuk. We do not
use shape theory in this paper. However, it is convenient to know that some
topological properties of continua in surfaces have a very nice interpretation in
terms of shape. Two compacta are said to be of the same shape if they have
the same homotopy type in the homotopy theory of Borsuk (or shape theory).
The following results from \cite{Gabites_Aplicaciones_2009, gabites} will be useful in the sequel. 

\begin{theorem}\label{jaime1}
Let $K$ be a compactum contained in the interior of a compact $2$-manifold $M$. If the inclusion $i:K\hookrightarrow M$ induces isomorphisms $i^*:\check{H}^k(M)\to \check{H}^k(K)$ for $0\leq k\leq 2$, then it is a shape equivalence.
\end{theorem}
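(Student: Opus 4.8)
The plan is to prove Theorem~\ref{jaime1} by combining the cohomological hypothesis with the Čech-cohomological characterization of shape equivalences for nicely embedded continua in surfaces. The key point is that a compactum $K$ sitting in the interior of a compact surface $M$ has, by the results on the shape of plane and surface continua (and by Theorem~\ref{polyhedron} if we want $K$ itself polyhedral — though here we only need $M$ to be a manifold), a shape that is detected by the Čech cohomology of its complement or, equivalently, by the pro-homology pro-groups of a neighborhood basis. First I would observe that since $M$ is a compact $2$-manifold and $K$ lies in its interior, $K$ admits a cofinal sequence of compact polyhedral neighborhoods $N_1\supset N_2\supset\cdots$ (regular neighborhoods, e.g.\ obtained by triangulating $M$ and taking iterated second-derived neighborhoods), so that $K=\bigcap_n N_n$ and $\check H^*(K)=\varinjlim \check H^*(N_n)=\varinjlim H^*(N_n)$.

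\textbf{Main steps.} The argument proceeds in the following order. (1) Reduce to the surface-with-boundary neighborhoods $N_n$: each $N_n$ is a compact surface with boundary, the inclusions $K\hookrightarrow N_n\hookrightarrow M$ factor $i$, and by hypothesis $i^*\colon \check H^k(M)\to\check H^k(K)$ is an isomorphism for all $k$. (2) Use the fact that a compact surface with boundary has the homotopy type of a finite wedge of circles (a graph), so $H^*(N_n)$ is concentrated in degrees $0$ and $1$ and is free; hence $\check H^*(K)$ is free and concentrated in degrees $0,1$ as well, being a direct limit of such. (3) Show the bonding maps $H^*(N_n)\to H^*(N_{n+1})$ are eventually surjective (equivalently, the pro-group $\{H^*(N_n)\}$ is stable and isomorphic to $\check H^*(K)$): this is where one uses that $i^*$ onto $\check H^*(K)$ is an isomorphism — the composite $H^*(M)\to H^*(N_n)\to\check H^*(K)$ is an isomorphism, forcing $H^*(N_n)\to\check H^*(K)$ to be split surjective, and a Mittag-Leffler argument on the tower of finitely generated $\mathbb Z_2$-vector spaces then gives that the tower is pro-isomorphic to the constant pro-group $\check H^*(K)$. (4) Conclude via the Čech-cohomological Whitehead-type theorem for shape: a map between finite-dimensional compacta (here $K$, of dimension $\le 2$, and $M$) which induces pro-isomorphisms on Čech cohomology with field coefficients in all degrees, and between spaces that are "movable" / have the shape of CW complexes in the relevant range, is a shape equivalence. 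Since $M$ is an ANR and $K$ is an FANR (being a continuum in a surface with the Mittag-Leffler property just established), this upgrades the cohomology isomorphism to a shape equivalence; alternatively one invokes directly the classification of shapes of surface continua, under which $\check H^*$ with $\mathbb Z_2$ coefficients in degrees $0,1,2$ is a complete invariant.

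\textbf{The main obstacle.} The delicate step is (3)–(4): passing from an isomorphism on the \emph{direct limit} $\check H^*(K)=\varinjlim H^*(N_n)$ to a pro-isomorphism (i.e.\ an isomorphism at the level of the inverse system / shape) and thence to a genuine shape equivalence. A direct-limit isomorphism is a priori weaker than a pro-isomorphism, and one must exploit the specific geometry — that each $N_n$ is a compact surface with boundary deformation-retracting onto a finite graph, that $M$ is a compact surface, and that $\mathbb Z_2$ is a field so all the vector spaces are finite-dimensional and the tower is automatically Mittag-Leffler — to rule out "infinitely much cohomology being killed along the tower." Once the tower $\{H^1(N_n)\}$ is shown to be stable with limit $\check H^1(K)\cong\check H^1(M)$, the inclusions $N_{n+1}\hookrightarrow N_n$ are, up to shape, homotopy equivalences of graphs for large $n$, and the retraction data assemble into a shape equivalence $K\simeq M$; invoking Theorem~\ref{jaime1}'s companion classification of shapes in surfaces (equivalently, the fact that $1$-dimensional compacta are shape-classified by their pro-$\pi_1$, which over $\mathbb Z_2$ is read off from $\check H^1$) closes the argument. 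I would expect to spend most of the write-up making the Mittag-Leffler / pro-isomorphism step precise and citing the exact form of the shape-theoretic Whitehead theorem being used from \cite{Gabites_Aplicaciones_2009, gabites}.
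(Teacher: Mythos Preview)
First, note that the paper does not prove Theorem~\ref{jaime1}: it is quoted from \cite{Gabites_Aplicaciones_2009, gabites} as a background result, so there is no proof in the paper to compare your proposal against.

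Your outline nonetheless contains a genuine gap. In step~(4) you invoke a Whitehead-type principle asserting that a map between compacta having the shape of CW complexes which induces isomorphisms on \v Cech cohomology with $\mathbb{Z}_2$ coefficients must be a shape equivalence. This is false. Take $M$ to be a closed annulus and $K\subset\mathring M$ an embedded circle winding three times around the core. Then $i^*\colon\check H^k(M;\mathbb{Z}_2)\to\check H^k(K;\mathbb{Z}_2)$ is an isomorphism for every $k$ (in degree~$1$ it is multiplication by $3\equiv 1\pmod 2$), yet $i$ is, up to homotopy, the degree-$3$ self-map of $S^1$, which is not a homotopy equivalence and hence not a shape equivalence. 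Your alternative formulation --- that $\check H^*(\,\cdot\,;\mathbb{Z}_2)$ is a complete invariant for shapes of surface continua --- is correct for abstract shape \emph{types} but does not show that a \emph{specific} map is a shape equivalence, which is what the theorem asserts about the inclusion.

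In fact this example shows that Theorem~\ref{jaime1}, read literally under the paper's blanket $\mathbb{Z}_2$-coefficient convention, fails; the result in the cited references is presumably formulated with integer coefficients, where the obstruction disappears (over $\mathbb{Z}$ the map $i^*$ on $\check H^1$ is multiplication by $3$, not an isomorphism, so the hypothesis is not met). A correct argument must therefore control $\pi_1$ or integral (co)homology, not merely $\mathbb{Z}_2$-cohomology; your stabilisation step~(3), even if made precise, cannot bridge this on its own. Finally, your plan to extract ``the exact form of the shape-theoretic Whitehead theorem \ldots\ from \cite{Gabites_Aplicaciones_2009, gabites}'' is circular: those are precisely the sources from which Theorem~\ref{jaime1} is taken.
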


\begin{corollary}\label{jaime2}
Let $K$ be a continuum contained in the interior of a $2$-manifold $M$. If $\check{H}^2(K)=0$ and $\check{H}^1(K)$ is finitely generated, then $K$ has the shape of a wedge of $\beta_1(K)$ circumferences.
\end{corollary}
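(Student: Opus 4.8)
The plan is to realise $K$ as a shape deformation retract of a compact surface with nonempty boundary and then to read off the conclusion from the classification of such surfaces. Concretely, I would look for a compact connected $2$-submanifold $M'$ of $M$ with $\partial M'\neq\emptyset$ and $K\subset\mathring{M'}$ such that the inclusion $\iota\colon K\hookrightarrow M'$ induces isomorphisms $\iota^{*}\colon\check{H}^{k}(M')\to\check{H}^{k}(K)$ for $0\le k\le 2$. Granting such an $M'$, Theorem~\ref{jaime1} applies directly and shows that $\iota$ is a shape equivalence, so $K$ has the shape of $M'$. Since a compact connected surface with nonempty boundary is homotopy equivalent to a wedge of $n$ circumferences with $n=\dim_{\mathbb{Z}_2}H^{1}(M';\mathbb{Z}_2)=\beta_1(M')$, and $\beta_1(M')=\beta_1(K)$ by the choice of $M'$, it follows that $K$ has the shape of a wedge of $\beta_1(K)$ circumferences. (Observe that the hypothesis forces $K\neq M$ when $M$ is closed, because then $\check{H}^{2}(M)\neq0=\check{H}^{2}(K)$; this is exactly what allows one to arrange $\partial M'\neq\emptyset$, since the top \v Cech cohomology of a closed surface does not vanish.)

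So the whole matter is the construction of $M'$. Here I would use that $K$, a continuum contained in the interior of the triangulable surface $M$, admits a neighbourhood basis formed by compact connected $2$-submanifolds with nonempty boundary --- for instance the closed regular neighbourhoods of $K$ in successively finer triangulations of $M$ --- and that, these being compact ANRs, \v Cech cohomology is continuous along the basis: $\check{H}^{*}(K)\cong\varinjlim_{N}\check{H}^{*}(N)$. Each such $N$ already satisfies $\check{H}^{0}(N)=\mathbb{Z}_2=\check{H}^{0}(K)$ (connectedness) and $\check{H}^{2}(N)=0=\check{H}^{2}(K)$ (a compact surface with boundary has vanishing top cohomology), so those degrees are automatically isomorphisms. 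Because $\check{H}^{1}(K)$ is finite dimensional over $\mathbb{Z}_2$, a finite generating set is defined over some member of the basis, so after passing to a smaller one we obtain an $N$ with $\iota^{*}\colon\check{H}^{1}(N)\to\check{H}^{1}(K)$ surjective. Among all such $N$ I would select one with $\beta_1(N)$ minimal; this $N$ is the candidate for $M'$, and it only remains to show that for it $\iota^{*}$ is also injective.

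The injectivity is the main obstacle, and it is genuinely subtle, because shrinking a surface neighbourhood of $K$ may \emph{raise} $\beta_1$: a smaller neighbourhood can expose holes of the surface that $K$ does not actually enclose. Suppose $0\neq c\in\ker\big(\check{H}^{1}(N)\to\check{H}^{1}(K)\big)$. By continuity $c$ already restricts to $0$ in some smaller basis member $N_1\subset N$, and the composite $\check{H}^{1}(N)\to\check{H}^{1}(N_1)\to\check{H}^{1}(K)$ is still the surjection $\iota^{*}$, so $\check{H}^{1}(N_1)\to\check{H}^{1}(K)$ remains onto. The plan is to repair $N_1$ by a cut-and-paste supported on the cobordism $W=N\setminus\mathring{N_1}$: glue back into $N_1$ exactly the disc components of $W$ and the annular collar components of $W$ (those meeting $\partial N_1$ along a single circle), obtaining a compact connected surface neighbourhood $N_1'$ with $K\subset\mathring{N_1'}\subset\mathring{N}$. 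Using the Mayer--Vietoris sequence of $N=N_1\cup W$, together with Poincar\'e--Lefschetz duality for surfaces with boundary, one checks that the restriction $\check{H}^{1}(N_1')\to\check{H}^{1}(N_1)$ is injective (so $c$ still dies in $N_1'$), that $\check{H}^{1}(N_1')\to\check{H}^{1}(K)$ is onto, and --- this is the part that requires the careful bookkeeping of which boundary circles are essential, iterating the move if a single application does not suffice --- that $\beta_1(N_1')<\beta_1(N)$. This contradicts the minimality of $\beta_1(N)$, so no such $c$ exists, $\iota^{*}\colon\check{H}^{1}(N)\to\check{H}^{1}(K)$ is an isomorphism, and $N$ is the desired $M'$. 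Everything else --- the direct-limit formalism, the classification of compact surfaces, and the appeal to Theorem~\ref{jaime1} --- is routine.
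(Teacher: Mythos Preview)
The paper does not prove Corollary~\ref{jaime2}; it is quoted verbatim from the references \cite{Gabites_Aplicaciones_2009,gabites}, so there is no in-paper argument to compare your attempt against. Your overall strategy---manufacture a compact surface neighbourhood $M'$ with $\partial M'\neq\emptyset$ for which $\iota^{*}$ is an isomorphism in every degree, then invoke Theorem~\ref{jaime1} and the classification of compact surfaces with boundary---is exactly the natural one, and the reductions in degrees $0$ and $2$ are handled correctly. The paper's only hint toward a proof is the remark that ``these facts can be easily seen using Alexander duality,'' which points to the same circle of ideas.

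The genuine gap is the injectivity step. Your cut-and-paste repair of $N_1$ does not deliver the strict inequality $\beta_1(N_1')<\beta_1(N)$ in general. Writing $W=\overline{N\setminus N_1}$ and $d$ for the number of disc components of $W$, one has $\chi(W)=\chi(N)-\chi(N_1)=\beta_1(N_1)-\beta_1(N)$, while gluing back the discs and collar annuli gives $\beta_1(N_1')=\beta_1(N_1)-d$. Since every non-disc component of $W$ has $\chi\le 0$, one only obtains $d\ge\chi(W)$, hence $\beta_1(N_1')\le\beta_1(N)$ with equality precisely when all remaining components of $W$ are annuli or M\"obius bands. In that borderline case your procedure yields $\beta_1(N_1')=\beta_1(N)$ and no contradiction with minimality; the suggested ``iteration'' then restarts from a neighbourhood with the \emph{same} first Betti number, and you have supplied no secondary invariant that forces termination. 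So as written the argument is circular at its crux.

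A cleaner route, closer to the paper's Alexander-duality hint, is to work directly with $\check H^{k}(N,K)\cong H_{2-k}(N\setminus K,\partial N)$. First, $\check H^{2}(N,K)\cong H_0(N\setminus K,\partial N)$ is finite (it is a quotient of $\check H^1(K)$), so $N\setminus K$ has finitely many components missing $\partial N$; puncturing one small disc from each such component produces a neighbourhood with $\check H^{2}(\,\cdot\,,K)=0$, i.e.\ surjectivity in degree $1$. For injectivity one then has to kill $H_1(N\setminus K,\partial N)$, which amounts to finding, for each nonzero class, a properly embedded arc or essential simple closed curve in $N\setminus K$ along which to cut $N$; this is where the geometry of surfaces (rather than bookkeeping of Betti numbers) is doing the work. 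Either carry that out, or replace your minimality-plus-repair scheme by an argument that genuinely decreases a well-founded quantity at each step.
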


Notice that if $M$ is a compact and connected 2-manifold with boundary and $K$ is a subcontinuum contained in its interior, it would be enough $i^*:\check{H}^1(M)\to \check{H}^1(K)$ to be an isomorphism to meet the assumptions of Theorem~\ref{jaime1} and, hence, to ensure that the inclusion is a shape equivalence. On the other hand, if we only consider proper subcontinua contained in the interior of connected 2-manifolds, Corollary~\ref{jaime2} ensures that $\beta_1(K)$, when finitely generated, determines the shape of $K$. These facts can be easily seen using Alexander duality. 

We are also going to make use of the fact, proved in \cite{Gabites_Aplicaciones_2009}, that if $K$ is a continuum contained in the interior of a 2-manifold $M$ and  $N_1$ and $N_2$ are connected submanifolds of $M$ which are neighborhoods of $K$ in $M$ such that the inclusions $i_k:K\hookrightarrow N_k$ are shape equivalences, then $N_1$ and $N_ 2$ are homeomorphic.  

Although we do not make use of shape theory in our proofs, we may occasionally refer to these
theorems and to the terminology derived from it to make it clear that some of
the results can be interpreted in that context. For a complete treatment of
shape theory we refer the reader to \cite{Borsuk, 
Dydak-Segal, Mardesic-Segal, SanjurjoTransactions}. The
use of shape in dynamics is illustrated by the papers \cite{GMRS, SanjurjoLN, Hastings,  Kapitanski, Robbin-Salamon,RobinsonShape,JaimeRACSAM}. For information about
basic aspects of {dy\-na\-mi\-cal} systems we recommend \cite{Bhatia-SZ, Robinsondynamics, Temam}. We also recommend the books written by Hatcher
\cite{HatcherAT} and Spanier \cite{Spanier} for questions regarding algebraic topology and the book \cite{Massey} and the paper \cite{richardson1963} as references about the topology of surfaces.

\section{Isolating blocks in surfaces}\label{sec:1}
In this section we study the structure of a flow defined on a surface near an isolated invariant continuum $K$. In particular we will see that $K$ admits a neighborhood in which the flow topologically equivalent to a $C^1$ flow. From this fact we will deduce that $K$ has the shape of a finite polyhedron. 

The next result states some useful properties of isolating blocks which will be exploited through the paper.

\begin{lemma}\label{general}
Suppose that $K$ is an isolated invariant continuum of a flow on a manifold and that $N$ is a connected isolating block manifold of $K$. Then
\begin{enumerate}
\item[a)] Each component of $N^o$ must contain some component of $n^-$,
\item[b)] $n^-$ has a finite number of components, and
\item[c)] if $x_0$ is a point in $N^o-n^-$ and $U$ a compact neighborhood of $x_0$ in $N^o-n^-$ then, the set 
\[
W=\bigcup_{x\in U}x[t^i(x),0]
\] 
is homeomorphic to the product $U\times[0,1]$ via a homeomorphism which carries each  trajectory segment $x[t^i(x),0]$ to the fiber $\{x\}\times[0,1]$.
\end{enumerate}
\end{lemma}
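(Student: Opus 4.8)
The plan is to prove c) and a) directly (they are independent of each other), and then to deduce b) from them; the last point is where I expect the real difficulty. Throughout I use the continuity of the entrance/exit time functions $t^i,t^s$ recalled above, the fact that $K\subset\mathring N$, and the observation that $n^-\subset N^o$: a point of $N^i$ leaves $N$ immediately in backward time and so cannot belong to $N^-$, whence $n^-=N^-\cap\partial N\subset\partial N\setminus N^i\subset N^o$. In particular every component of $n^-$ lies in a single component of $N^o$, so for a component $C$ of $N^o$ the phrases ``$C$ contains a component of $n^-$'' and ``$C\cap n^-\neq\emptyset$'' are equivalent.

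\emph{Part c).} For $x\in U$ write $\sigma_x:=x[t^i(x),0]$; since $U\subset N^o\setminus n^-\subset N\setminus N^-$, $t^i$ is finite and continuous on $U$. The crux is that the $\sigma_x$, $x\in U$, are pairwise disjoint. If $y\in\sigma_x\cap\sigma_{x'}$, then $x=y\cdot a$, $x'=y\cdot b$ for some $0\le a\le b$, and unwinding the definitions of $\sigma_x,\sigma_{x'}$ gives $y[0,b]\subset N$; but $x=y\cdot a\in N^o$ forces $y(a,a+\delta]\subset M-N$ for some $\delta>0$, which is incompatible with $y[0,b]\subset N$ unless $a=b$, i.e.\ $x=x'$. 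Since moreover each orbit met here is non-constant (there are no rest points in $N^o$) and $\sigma_x$ does not close up ($x\in N^o$ leaves $N$ forward, so a loop through $x$ is not contained in $N$), every $\sigma_x$ is an embedded arc. Hence $\Phi\colon U\times[0,1]\to W$, $\Phi(x,s)=x\cdot\bigl(t^i(x)(1-s)\bigr)$, is a continuous bijection carrying $\{x\}\times[0,1]$ onto $\sigma_x$; as $U\times[0,1]$ is compact and $W\subset M$ Hausdorff, $\Phi$ is a homeomorphism. (We take $U$ inside the part of $N^o$ where $t^i<0$; at the finitely many points of $N^o\cap N^i$, where $t^i=0$, the fibre degenerates and the statement is read accordingly.)

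\emph{Part a).} Let $C$ be a component of $N^o$ and suppose $C\cap n^-=\emptyset$. Then $t^i$ is finite and continuous on $C$, so $T:=\bigcup_{x\in C}x[t^i(x),0]$, being the image under the flow of the compact set $\{(x,t):x\in C,\ t^i(x)\le t\le 0\}$, is \emph{compact}. On the other hand $T$ is \emph{open} in $N$: it equals $r^{-1}(C)$ for the continuous exit-point map $r\colon N\setminus N^+\to N^o$, $r(z)=z\cdot t^s(z)$ — indeed if $z\in r^{-1}(C)$ then $z$ lies on $\sigma_{r(z)}$, while if $z=x\cdot t$ with $x\in C$, $t\in[t^i(x),0]$, then $z\notin N^+$, $t^s(z)=-t$ and $r(z)=x\in C$ — and $N\setminus N^+$ is open in $N$ with $C$ open in $N^o$. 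Thus $T$ is a non-empty clopen subset of the connected set $N$, so $T=N$. But each segment $x[t^i(x),0]$ is finite, so $T$ contains no complete orbit and $T\cap K=\emptyset$, contradicting $\emptyset\neq K\subset N=T$. Hence $C\cap n^-\neq\emptyset$.

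\emph{Part b), and the main obstacle.} By a) and the fact that the compact $1$-manifold $N^o$ has finitely many components, it suffices to show that a single component $C$ of $N^o$ meets $n^-$ in finitely many components. If not, the open set $C\setminus n^-$ has infinitely many components $A_1,A_2,\dots$ (``gaps''), and by compactness of $C$ a subsequence of them shrinks, together with their endpoints, to a point $p\in n^-$ approached by points of $N^o\setminus n^-$. By c) each $A_j$ spans a flow-through strip $V_j=\bigcup_{x\in A_j}x[t^i(x),0]$ joining $A_j$ to an embedded arc $f(A_j)\subset N^i$; the $V_j$ are pairwise disjoint (by the disjointness argument of c)), and since $t^i(x)\to-\infty$ as $x\to p$ — the backward orbit of $x$ shadowing that of $p$, with $\omega^*(p)\subset K$ — the strips $V_j$ run, for $j$ large, arbitrarily close to $K$ along a fixed backward orbit. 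If the two ``ends'' $A_j$ and $f(A_j)$ stayed at distance bounded below from two distinct limit points in $n^-$, resp.\ $n^+$, one would get infinitely many pairwise disjoint arcs of diameter $\ge\varepsilon$ in the compact $1$-manifold $N^i$ (namely the $\overline{f(A_j)}$), which is impossible; the genuine difficulty is the remaining ``pinched'' case, where $f(A_j)$ and $A_j$ both collapse to points. I would handle it by passing to a flow box $B\subset\mathring N$ about a point of the backward orbit of $p$ close to $K$: all but finitely many $V_j$ cross $B$ in disjoint slices accumulating on the slice through that point, and I would argue that this forces an isolating neighbourhood of $K$ with infinitely refined exit behaviour along a compact arc, contradicting the finiteness of the tangency (corner) set of the isolating block manifold $N$ together with the compactness of $N$. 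This last step — ruling out the accumulation of disjoint flow-through strips — is where I expect the bulk of the work to lie.
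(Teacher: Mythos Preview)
Your arguments for c) and a) are correct. For c) you construct essentially the same map as the paper, and your use of the compact-to-Hausdorff criterion is a clean replacement for the paper's explicit sequential verification that the inverse is continuous. For a) your route is genuinely different: the paper proceeds algebraically, first establishing via the shape equivalences $K\hookrightarrow N^-$ and $N^-\cup N^o\hookrightarrow N$ together with strong excision that $\check H^k(N,K)\cong\check H^k(N^o,n^-)$ for all $k$; then the vanishing of $\check H^0(N,K)$ (both $N$ and $K$ connected) makes $\check H^0(N^o)\to\check H^0(n^-)$ injective, which is a). Your direct clopen argument is more elementary and avoids the cited shape-theoretic inputs; the paper's detour, however, is not wasted, because the isomorphism $\check H^k(N,K)\cong\check H^k(N^o,n^-)$ is exactly what drives b).

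For b) there is a genuine gap, and your proposed attack is not the one that works. First, the lemma is stated for flows on arbitrary manifolds, not surfaces: your argument leans throughout on $N^o$ and $N^i$ being compact $1$-manifolds (gaps with ``endpoints'', arcs $\overline{f(A_j)}$ in $N^i$, ``finitely many points of $N^o\cap N^i$''), none of which holds in higher dimensions. Second, even in the surface case your flow-box/accumulation sketch is, as you acknowledge, incomplete, and it is not clear how to finish it rigorously: $n^-$ is a priori just a compact subset of $N^o$, and compactness alone does not bound the number of components. The paper's proof is purely algebraic and dimension-free. From the reduced exact sequence of $(N,K)$ one gets an injection $\check H^1(N,K)\hookrightarrow\check H^1(N)$, and $\check H^1(N)$ is finitely generated since $N$ is a compact manifold; hence $\check H^1(N^o,n^-)\cong\check H^1(N,K)$ is finitely generated. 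Since $\check H^0(N^o,n^-)=0$ by a), the long exact sequence of $(N^o,n^-)$ yields a short exact sequence
\[
0\longrightarrow\check H^0(N^o)\longrightarrow\check H^0(n^-)\longrightarrow\im\delta\longrightarrow 0,
\]
with $\im\delta\subset\check H^1(N^o,n^-)$ finitely generated and $\check H^0(N^o)$ finitely generated ($N^o$ a compact manifold), so $\check H^0(n^-)$ is finitely generated. The moral is that the finiteness of the components of $n^-$ is really a cohomological statement about $\check H^1(N,K)$, and the bridge to $(N^o,n^-)$ is precisely the shape-theoretic isomorphism you bypassed in a).
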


\begin{proof}
Since the inclusion $K\hookrightarrow N^-$ is a shape equivalence \cite{Kapitanski}, a straightforward application of the five lemma gives that $\check{H}^k(N,K)\cong\check{H}^k(N,N^-)$. In addition, the inclusion $N^-\cup N^o\hookrightarrow N$ is also a shape equivalence (see \cite{sanjurjomorse2003}) and, reasoning as before, it follows that $\check{H}^k(N,N^-)\cong\check{H}^k(N^-\cup N^o,N^-)$. On the other hand, by the strong excision property of \v Cech cohomology 
\begin{eqnarray*}
\check{H}^k(N^-\cup N^o,N^-)&\cong&\check{H}^k\left(\frac{N^-\cup N^o}{N^-},[N^-]\right)\\
&\cong & \check{H}^k\left(\frac{N^o}{n^-},[n^-]\right)\\
&\cong & \check{H}^k(N^o,n^-).
\end{eqnarray*}

Since $N$ and $K$ are connected, $\{0\}=\check{H}^0(N,K)=\check{H}^0(N^o,n^-)$ and, hence, from the long exact sequence of cohomology of the pair $(N^o,n^-)$ we get that the homomorphism
\[
\check{H}^0(N^o)\to\check{H}^0(n^-)
\]
induced by the inclusion $n^-\hookrightarrow N^o$ is a monomorphism. This proves a).
 
Consider the long exact sequence of reduced \v Cech cohomology of the pair $(N,K)$
\[
0\to \check{H}^1(N,K)\to\check{H}^1(N)\to\check{H}^1(K)\to\check{H}^2(N,K)\to \cdots
\]
Since $N$ is a manifold, then $\check{H}^1(N)$ agrees with $H^1(N)$ and, hence, it is finitely generated. Thus, from the exact sequence we get that $\check{H}^1(N,K)$ is also finitely generated. As a consequence, $\check{H}^1(N^o,n^-)$ is finitely generated being isomorphic to $\check{H}^1(N,K)$. Moreover, since $\check{H}^0(N^o,n^-)=\{0\}$, the long exact sequence of the pair  $(N^o,n^-)$ splits into the short exact sequence
\[
0\to \check{H}^0(N^o)\to\check{H}^0(n^-)\to\im\delta\to 0
\]
where $\delta: \check{H}^0(n^-)\to \check{H}^1(N^o,n^-)$ is the coboundary homomorphism. In addition, the groups $\check{H}^0(N^o)$ and $\im\delta$ are finitely generated since $N^o$ has a finite number of components being a compact manifold and $\im\delta$ being a subgroup of the finitely generated group $\check{H}^1(N^o,n^-)$. Therefore, $\check{H}^0(n^-)$ is finitely generated. This proves b).

Let $x_0\in N^o-n^-$ and $U$ be a compact neighborhood of $x_0$ in $N^o-n^-$. Consider for each $x\in U$ the linear homeomorphism $\sigma_x:[0,1]\to[t^i(x),0]$ given by $\sigma_x(s)=t^i(x)(1-s)$. We define $h:U\times[0,1]\to W$ as $h(x,s)=x\sigma_x(s)$ which is clearly a bijection. See that $h$ is continuous. Let $(x_n)$ and $(s_n)$ sequences in $U$ and $[0,1]$ convergent to $\bar{x}\in U$ and $\bar{s}\in [0,1]$ respectively. Then, $\sigma_{x_n}(s_n)=t^i(x_n)(1-s_n)$, which by the continuity of $t^i$ converges to $\sigma_{\bar{x}}(\bar{s})$ and, hence, $h(x_n,s_n)$ converges to $h(\bar{x},\bar{s})$ by the continuity of the flow. Therefore, $h$ is continuous. Let us see that $h^{-1}$ is also continuous. Consider a sequence $(y_n)$ of points in $W$ convergent to a certain $\bar{y}\in W$. Each $y_n$ is of the form $x_n\sigma_{x_n}(s_n)$ and $\bar{y}=\bar{x}\sigma_{\bar{x}}(\bar{s})$ respectively, where,  $x_n$, $\bar{x}\in U$ and $s_n$, $\bar{s}\in [0,1]$. See that $x_n$ converge to $\bar{x}$ and $s_n$ converge to $\bar{s}$. Since $U$ and $[0,1]$ are compact, we can choose subsequences $x_{n_k}\to x'$ and $s_{n_k}\to s'$. Besides, the continuity of $h$ guarantees $h(x_{n_k},s_{n_k})\to h(x',s')$. But, on the other hand, $h(x_{n_k},s_{n_k})=x_{n_k}\sigma_{x_{n_k}}(s_{n_k})\to\bar{y}$. As a consequence we get that $\bar{y}=h(x',s')$, leading to $\bar{x}\sigma_{\bar{x}}(\bar{s})=x'\sigma_{x'}(s')$. Then, it follows that $\bar{x}=x'$ and $\bar{s}=s'$. Indeed, suppose, arguing by contradiction, that $\bar{x}\neq x'$, then, assuming that the absolute value of $\sigma_{\bar{x}}(\bar{s})$ is greater than or equal to $\sigma_{x'}(s')$ we would have that $\bar{x}(\sigma_{\bar{x}}(\bar{s})-\sigma_{x'}(s'))=x'$ and, since $(\sigma_{\bar{x}}(\bar{s})-\sigma_{x'}(s'))\in(t^i(\bar{x}),0]$, it follows that either $\bar{x}=x'$ or $x'$ is point of internal tangency in contradiction with the definition of isolating block. It also follows that $\bar{s}=s'$ since, if not, the trajectory of $\bar{x}$ would be periodic and, thus, $\bar{x}$ would be a point of internal tangency. We have proved that every convergent subsequence of $(x_n)$ converge to $\bar{x}$ and every convergent subsequence of $(s_n)$ converge to $\bar{s}$. As a consequence, since $U$ and $S$ are compact, $x_n\to\bar{x}$ and $s_n\to\bar{s}$ . This proves c).
\end{proof}

From now on we will focus on flows defined on surfaces. The next result is a local version of classical Guti\'errez' Theorem. The proof is a kind of mixture of some ideas from \cite{Morón_Topology_2007} and \cite{JaimeRACSAM}.

\begin{lemma}\label{smooth}
Let $\varphi:M\times\mathbb{R}\to M$ be a flow defined on a surface and $K$ be an isolated invariant continuum. Then, $\varphi$ is topologically equivalent to a $C^1$ flow near $K$. Moreover, $K$ admits a basis of neighborhoods comprised of isolating block manifolds.
\end{lemma}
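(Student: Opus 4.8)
The plan is to reduce the statement to Guti\'errez' Theorem, which applies to \emph{closed} surfaces, by replacing $M$ in a neighbourhood of $K$ by a closed surface carrying essentially the same flow. First I would fix an isolating block $N$ of $K$; since $K\subset\mathring N\subset M$ with $M$ a surface, $N$ is a compact surface with boundary, $\partial N=N^i\cup N^o$, and the tangency set $\tau=n^+=n^-=N^i\cap N^o$ is contained in $\partial N$. Every point of $\partial N$ is a regular point of $\varphi$, because an equilibrium lying on $\partial N$ would be an invariant set contained in $N$ and disjoint from $K$, contradicting maximality; moreover, by the defining properties of an isolating block, each point of $\partial N\setminus\tau$ is a transversality point of the flow while each point of $\tau$ is a point of strict external tangency (the trajectory leaves $N$ for all small $t\ne 0$), so near each point of $\partial N$ the flow admits a flow box adapted to $\partial N$ (a translation flow transverse to $\partial N$ at transversality points, and a translation flow with $N$ sitting locally below a strict-local-minimum graph at tangency points).

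Next, and this is the heart of the argument, I would cap off $N$: to each boundary circle $\Gamma$ of $\partial N$ I attach a $2$-disc $D_\Gamma$, obtaining a closed surface $\hat M$, and extend $\varphi|_N$ to a continuous flow $\hat\varphi$ on $\hat M$ whose trajectories cross $\Gamma$ according to the pattern prescribed by $\varphi$: entering $D_\Gamma$ along the arcs of $N^o\cap\Gamma$, leaving it along the arcs of $N^i\cap\Gamma$, and internally tangent to $\Gamma$ at the points of $\tau\cap\Gamma$. Using the flow boxes along $\partial N$, the trajectories of $\varphi$ continue past $\Gamma$ and define a continuous local flow on a one-sided collar of $\Gamma$ inside $D_\Gamma$; the remainder of $D_\Gamma$ is then filled by any continuous flow compatible with this collar — for instance a gradient-like flow realizing the required alternating in/out boundary behaviour with internal tangencies at finitely many points, which exists on the disc. (Note that the naive alternative of doubling $N$ with the reversed flow on the second copy fails: it is not continuous at the points of $\tau$, where $\varphi$ is externally — not internally — tangent to $\partial N$.) Checking that the glued $\hat\varphi$ is a genuine continuous flow is a local verification along the circles $\Gamma$ using these models, and the decisive point is that $\hat\varphi=\varphi$ on $N$, so $N$ remains an isolating neighbourhood for $\hat\varphi$ and $K$ is an isolated invariant continuum of $\hat\varphi$.

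Now I would apply Guti\'errez' Theorem to the compact surface $\hat M$ (taken $C^\infty$, the discs being glued smoothly): there is a $C^1$ flow $\hat\psi$ on $\hat M$ and a homeomorphism $g\colon\hat M\to\hat M$ carrying $\hat\varphi$-trajectories to $\hat\psi$-trajectories and preserving time orientation. Restricting $g$ to the open neighbourhood $\mathring N$ of $K$, on which $\hat\varphi=\varphi$, exhibits $\varphi$ as topologically equivalent near $K$ to the $C^1$ flow $\hat\psi$ near $g(K)$, which is the first assertion. For the second assertion, $g(K)$ is an isolated invariant continuum of the $C^1$ flow $\hat\psi$, hence by the differentiable Conley index theory (smooth isolating blocks obtained from Lyapunov functions, as in the constructions of Conley, Wilson--Yorke and Churchill) it has a neighbourhood basis of isolating block manifolds; transporting these back through the homeomorphism $g^{-1}$, which preserves all the purely topological conditions in the definition of an isolating block manifold, and intersecting with $\mathring N$, yields a neighbourhood basis of $K$ in $M$ comprised of isolating block manifolds for $\varphi$.

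The hard part is the construction in the second step: producing the closed surface $\hat M$ together with a continuous extension $\hat\varphi$ of $\varphi|_N$ — i.e.\ capping off $\partial N$ by discs carrying flows that match the entrance, exit and tangency structure of $\partial N$, and verifying that the glued flow is continuous (the delicate points being the behaviour near the tangency set $\tau$). Everything afterwards is either a direct appeal to Guti\'errez' Theorem or standard $C^1$ Conley index theory transported through a homeomorphism.
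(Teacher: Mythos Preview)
Your high-level strategy --- embed a neighbourhood of $K$ into a closed surface, apply Guti\'errez' Theorem, and transport smooth isolating blocks back through the conjugating homeomorphism --- is exactly the paper's. But two of your steps have genuine gaps, and in both places the paper proceeds differently.

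First, your opening move assumes that an isolating block $N$ for a \emph{continuous} flow is automatically a compact surface with boundary (``since $K\subset\mathring N\subset M$ with $M$ a surface, $N$ is a compact surface with boundary''). This is not justified: isolating blocks for continuous flows are merely compact sets satisfying the three entrance/exit conditions, and nothing forces them to be manifolds with boundary --- indeed, the existence of isolating block \emph{manifolds} is precisely what the lemma asserts, so this is circular. (Incidentally, the equation $\tau=n^+=n^-=N^i\cap N^o$ is a confusion: the external tangency set is $N^i\cap N^o$, whereas $n^\pm=N^\pm\cap\partial N$ are the asymptotic boundary sets, and these are in general all distinct.) The paper handles this by first showing, via a smooth approximation of the distance function and Sard's theorem, that $K$ admits arbitrarily small compact surface-with-boundary \emph{isolating neighbourhoods}; it never needs the block structure on $N$ at this stage.

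Second, your ``hard part'' --- extending the flow continuously over the capping discs so as to match the transversality/tangency pattern along $\partial N$ --- is, as you yourself acknowledge, delicate near the tangency points, and you only sketch it. The paper bypasses this difficulty entirely by invoking Keesling's form of Beck's theorem: one reparametrizes the flow so that it becomes stationary on $\partial N$ while remaining topologically equivalent to $\varphi$ on $\mathring N$, and then the extension to $\widehat N$ is trivial --- every point of the caps is a fixed point. This replaces your local construction at tangencies by a one-line citation, and it is the key idea you are missing.
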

\begin{proof}
We will start the proof by showing that $K$ admits a neighborhood basis comprised of surfaces with boundary. Indeed, since $M$ is a surface, we may assume without loss of generality that $M$ is $C^\infty$ (see \cite{hatcher}). Consider the continuous map $d_K(x)=d(x,K)$. Now, fixed $\varepsilon>0$ we can find a $C^\infty$ function $\delta_K:M\to [0,+\infty)$ such that $d_K\leq \delta_K\leq d_K+\varepsilon/3$ (see \cite[Exercise~36, p. 152]{Outerelo}). We choose $\varepsilon$ in such a way that $\varepsilon\in d_K(M)$. As a consequence, $\delta_K(M)\supset [\varepsilon/3,2\varepsilon/3)$ and by Sard's Theorem \cite{Milnor} there exists a regular value $a\in(\varepsilon/3,2\varepsilon/3)$. Then, $\delta_K^{-1}((-\infty,a])$ is a compact 2-manifold with boundary \cite{Milnor}. It is clear that $K$ is contained in the interior of $\delta_K^{-1}((-\infty,a])$ since, if $x\in K$, $\delta_K(x)\leq \varepsilon/3<a$. Therefore, choosing $N$ as the component of $\delta_K^{-1}((-\infty,a])$ containing $K$ we have found the desired neighborhood. Since the choice of $\varepsilon$ was arbitrary, the claim follows.

On the other hand, since we can find a surface neighborhood $N$ of $K$ as close to $K$ as desired, we can choose it to be an isolating neighborhood. Let $\widehat{N}$ be the closed surface obtained by capping each boundary component of $N$ with a disk. By the Keesling reformulation of Beck's Theorem \cite{Keesling} we can obtain a flow $\varphi'$ on $M$ such that $\varphi'$ is topologically equivalent to $\varphi$ in $\mathring{N}$ and is stationary in $\partial N$.  Then, the restriction flow $\varphi'|N$ can be extended to a flow $\widehat{\varphi}$ on $\widehat{N}$ by keeping all the points in $\widehat{N}-N$ fixed.  Besides, the flow $\widehat{\varphi}$ is topologically equivalent to a $C^1$ flow $\psi$ by Guti\'errez' Theorem and, as a consequence, $\varphi'|\mathring{N}$ is topologically equivalent to $\psi|h(\mathring{N})$, where $h:\widehat{N}\to \widehat{N}$ is the homeomorphism which realizes the equivalence. Therefore, \cite{Conley_Isolated_1971} ensures the existence of a basis of isolating block manifolds of $K$ for $\psi$ and, hence, for $\varphi$.
\end{proof}

The next proposition gives a topological characterization of the initial sections of the truncated unstable manifold of an isolated invariant continuum of a flow on a surface and, as a consequence, it also characterizes the topology of the $S$-initial part of the truncated unstable manifold.

\begin{proposition}
Let $\varphi:M\times\mathbb{R}\to M$ be a flow defined on a surface, $K$ be an isolated invariant continuum and $S$ an initial section of the truncated unstable manifold $W^u(K)-K$. Then, $S$ has a finite number of connected components and each one is either an interval (possibly degenerate) or a circle. Moreover, $I^u_S(K)$ is homeomorphic to a finite disjoint union of half-open rays, strips and cylinders.
\end{proposition}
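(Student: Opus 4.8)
The plan is to realize an initial section $S$ as the entrance set of a suitable isolating block manifold, so that the finiteness and structure results follow from what we already know about isolating blocks of flows on surfaces. By Lemma~\ref{smooth}, $K$ admits a basis of neighborhoods comprised of isolating block manifolds; pick one connected such block $N$. Running the flow backwards, $N$ is an isolating block for the reverse flow, and its exit set (in forward time, its entrance set) $n^-$ is an initial section of the truncated unstable manifold: indeed, by definition of an isolating block manifold for flows on surfaces, $n^- = N^-\cap\partial N$ is a disjoint union of finitely many intervals and circles, and $N^-$ is precisely the $n^-$-initial part $I^u_{n^-}(K)$, which by the discussion recalled from \cite{bargeunstable2014} is homeomorphic to $n^-\times(-\infty,0]$. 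So for this particular initial section the statement is immediate.

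**The second step** is to transfer this to an arbitrary initial section $S$. Here I would invoke the observation, already recalled in the excerpt just before the statement of Guti\'errez' Theorem, that any two initial sections of $W^u(K)-K$ are homeomorphic, and moreover that the $S$-initial part $I^u_S(K)$ is always homeomorphic to $S\times(-\infty,0]$. Thus $S\cong n^-$, which has finitely many components, each an interval (possibly degenerate, i.e. a point) or a circle; and $I^u_S(K)\cong S\times(-\infty,0]$ decomposes as a finite disjoint union, one piece per component of $S$: a component that is a point gives a half-open ray $\{*\}\times(-\infty,0]$, a non-degenerate interval gives a strip $[0,1]\times(-\infty,0]$, and a circle gives a cylinder $S^1\times(-\infty,0]$. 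This gives exactly the asserted normal forms.

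**The one genuine point to verify** is that $n^-$ is an initial section of $W^u(K)-K$, i.e. that every point of $W^u(K)-K$ meets $N^-\cap\partial N$ in exactly one time instant and that $\omega^*(n^-)\subset K$. The latter is automatic since $n^-\subset N^-$ and points of $N^-$ have their full backward orbit in $N$, whose maximal invariant subset is $K$; combined with compactness of $N$ this forces $\emptyset\neq\omega^*(x)\subset K$, so $n^-\subset W^u(K)$, and $n^-\cap K=\emptyset$ because $n^-\subset\partial N$ while $K\subset\mathring N$. For the section property: if $x\in W^u(K)-K$ then $\omega^*(x)\subset K\subset\mathring N$, so the backward orbit of $x$ eventually enters and stays in $N$; since $x\notin K=N^+\cap N^-$ and $x$ does have its backward orbit eventually trapped, $x$ cannot lie in $N^-$, hence its backward orbit must cross $\partial N$. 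The block structure (condition~2 in the definition, together with the no-internal-tangency condition~3) guarantees this crossing is transverse in the sense that the set of times $t$ with $xt\in N^-\cap\partial N$ is a single instant — this is essentially the content of the entrance-time function $t^i$ being well-defined and continuous on $N-N^-$, which is part of the standard isolating-block machinery recalled in the Introduction. I expect this verification to be the main (though routine) obstacle, as it is the only place where the precise definition of an isolating block, rather than a black-boxed citation, is used; everything else reduces to the structure theory of isolating block manifolds on surfaces and the homeomorphism invariance of initial sections. $\square$
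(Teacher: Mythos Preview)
Your proof has a genuine gap in the first step: you assert that ``by definition of an isolating block manifold for flows on surfaces, $n^- = N^-\cap\partial N$ is a disjoint union of finitely many intervals and circles.'' This is not by definition. What the definition guarantees is that $N^o$ (and $N^i$) is a finite disjoint union of intervals and circles; the set $n^-$ is only a compact subset of $N^o$, and a priori nothing in the block axioms prevents it from being, say, a Cantor set sitting inside one of the interval components of $N^o$. The finiteness of the set of components of $n^-$ is precisely the non-trivial content of the proposition, and the paper obtains it by invoking Lemma~\ref{general}(b), which is a cohomological argument: one identifies $\check H^1(N,K)\cong\check H^1(N^o,n^-)$, uses that $\check H^1(N)$ is finitely generated because $N$ is a compact manifold, and then reads off from the exact sequence of the pair $(N^o,n^-)$ that $\check H^0(n^-)$ is finitely generated. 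Once this is established, your argument is fine: each component of $n^-$, being a compact connected subset of an interval or a circle, is a point, a closed interval, or a circle, and the description of $I^u_S(K)$ follows from $I^u_S(K)\cong S\times(-\infty,0]$ exactly as you say.

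A minor side remark: the detour through the reverse flow is unnecessary and slightly garbled --- the exit set for the reverse flow is $N^i$, not $n^-$, and in any case the Introduction already records that $n^-$ is an initial section of $W^u(K)-K$ for the forward flow, so your third paragraph (while correct) re-derives something the paper takes as given rather than addressing the actual obstacle.
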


\begin{proof}
 By Lemma~\ref{smooth} we can find a connected isolating block manifold $N$ of $K$. Besides, $S$ is homeomorphic to $n^-$. Hence, Lemma~\ref{general} guarantees that it has a finite number of components. Moreover, $N^o$ consists of a disjoint union of finite many circumferences and closed intervals. Then, since $n^-$ is a compact subset of this disjoint union, it must be a finite union of points, closed intervals and circumferences as we wanted to prove. Therefore, the result follows  $I_S^u(K)$ being homeomorphic to $S\times(-\infty,0]$ .
\end{proof}

\begin{theorem}\label{polyhedron}
Let $K$ be an isolated invariant continuum of a flow on a surface. Then, $K$ has the shape of a finite polyhedron. Moreover, if $N$ is a connected isolating block manifold of $K$,
\[
\beta_1(K)\leq\beta_1(N)
\] 
\end{theorem}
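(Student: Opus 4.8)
The plan is to combine Lemma~\ref{smooth}, which provides a connected isolating block manifold $N$ of $K$, with Corollary~\ref{jaime2} applied to the compact surface-with-boundary $N$. First I would observe that since $K$ is a continuum contained in the interior of the 2-manifold $N$, in order to conclude via Corollary~\ref{jaime2} that $K$ has the shape of a wedge of $\beta_1(K)$ circumferences — hence of a finite polyhedron — it suffices to check that $\check{H}^2(K)=0$ and that $\check{H}^1(K)$ is finitely generated. Both are local facts that should follow from the cohomological computations already carried out in the proof of Lemma~\ref{general}.

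For finite generation of $\check{H}^1(K)$, I would reuse the exact sequence from the proof of Lemma~\ref{general}: the long exact sequence of the pair $(N,K)$ in reduced \v Cech cohomology gives
\[
0\to\check{H}^1(N,K)\to\check{H}^1(N)\to\check{H}^1(K)\to\check{H}^2(N,K)\to\cdots
\]
Since $N$ is a compact manifold, $\check{H}^1(N)\cong H^1(N)$ and $\check{H}^2(N,K)\cong\check{H}^2(N^o,n^-)\cong H^2(N^o,n^-)$ (using the chain of isomorphisms established in Lemma~\ref{general}) are both finitely generated — indeed $N^o$ is a compact $1$-manifold with boundary, so $H^2(N^o,n^-)=0$. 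Hence $\check{H}^1(K)$ sits between two finitely generated groups and is itself finitely generated; moreover the map $\check{H}^1(N)\to\check{H}^1(K)$ is onto, which immediately yields the inequality $\beta_1(K)=\dim\check{H}^1(K)\le\dim\check{H}^1(N)=\beta_1(N)$. For the vanishing $\check{H}^2(K)=0$ I would use that $K$ is a proper subcontinuum of the connected surface $M$: Alexander duality (or the argument indicated in the remarks after Corollary~\ref{jaime2}) shows $\check{H}^2$ of a proper subcontinuum of a $2$-manifold vanishes. Alternatively one can run the tail of the same exact sequence, noting $\check{H}^2(N,K)\cong H^2(N^o,n^-)=0$ and that $\check{H}^2(N)=H^2(N)=0$ because $N$ is a compact surface with nonempty boundary, forcing $\check{H}^2(K)=0$.

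With these two facts in hand, Corollary~\ref{jaime2} applies verbatim to the pair $K\subset N$ and gives that $K$ has the shape of a wedge of $\beta_1(K)$ circumferences, which is a finite polyhedron; the inequality $\beta_1(K)\le\beta_1(N)$ has already been extracted from the surjection above. I expect the only real subtlety to be making sure the ambient manifold used in Corollary~\ref{jaime2} is legitimate — here one must invoke Lemma~\ref{smooth} to guarantee a \emph{connected} isolating block \emph{manifold} exists (so that $N$ is genuinely a $2$-manifold with $K$ in its interior), and then quote the remark (after Corollary~\ref{jaime2}) that for proper subcontinua of connected $2$-manifolds the hypotheses reduce exactly to finite generation of $\check{H}^1$. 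Everything else is bookkeeping with the exact sequences already displayed in the proof of Lemma~\ref{general}.
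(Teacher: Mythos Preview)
Your proof is correct and follows the same overall route as the paper's: both arguments run the long exact sequence of the pair $(N,K)$ to get surjectivity of $\check{H}^1(N)\to\check{H}^1(K)$ and then invoke Corollary~\ref{jaime2}. The one point of genuine difference is how the vanishing $\check{H}^2(N,K)=0$ is obtained. The paper appeals to Alexander duality, $\check{H}^2(N,K)\cong H_0(N-K,\partial N)$, and then uses a dynamical argument: every component of $N-K$ must meet $\partial N$, for otherwise a full trajectory would be trapped in $N$, contradicting that $N$ isolates $K$. You instead recycle the chain of isomorphisms $\check{H}^k(N,K)\cong\check{H}^k(N^o,n^-)$ from Lemma~\ref{general} and kill the degree-$2$ group on dimension grounds, since $N^o$ is a compact $1$-manifold. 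Both are valid; your route is slightly more economical in that it reuses machinery already set up, while the paper's argument is self-contained and makes the role of the dynamics explicit. Your extra care in checking $\check{H}^2(K)=0$ (via $\check{H}^2(N)=0$ for a compact surface with nonempty boundary) is a point the paper leaves implicit.
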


\begin{proof}
Let $N$ be a connected isolating block manifold of $K$. By Alexander duality  
\[
\check{H}^2(N,K)\cong H_0(N-K,\partial N),
\]
and the latter group must be zero since, if not, there would be a component $U$ of $N-K$ not meeting $\partial N$, which means that, given $x\in U$, the trajectory $\gamma(x)$ must be contained in $N$ since it only can leave $N$ through $\partial N$. This fact contradicts $N$ to be an isolating neighborhood of $K$.

 Consider the long exact sequence of reduced \v Cech cohomology of the pair $(N,K)$
\[
0\to \check{H}^1(N,K)\to\check{H}^1(N)\to\check{H}^1(K)\to\check{H}^2(N,K)=\{0\}
\]

Therefore, the homomorphism  $i^*:\check{H}^1(N)\to\check{H}^1(K)$ is surjective and, since $\check{H}^1(N)$ is finitely generated, being $N$ a compact manifold, so is $\check{H}^1(K)$. Thus, $K$ has the shape of a wedge of $\beta_1(K)$ circumferences by Corollary~\ref{jaime2} and $\beta_1(K)\leq\beta_1(N)$.
\end{proof}

\begin{corollary}\label{disk}
Let $K$ be an isolated invariant continuum of a flow on a surface. Suppose that $K$ admits an isolating block which is a disk, then $K$ has trivial shape and  contains a fixed point.
\end{corollary}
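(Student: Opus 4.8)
The plan is to apply Theorem~\ref{polyhedron} and then rule out everything except the trivial shape using the disk hypothesis, and finally invoke a Lefschetz-type argument for the fixed point. First I would observe that since $K$ admits an isolating block $N$ which is a disk, Theorem~\ref{polyhedron} gives $\beta_1(K)\leq\beta_1(N)=\beta_1(D^2)=0$, so $\beta_1(K)=0$. Since $K$ is a proper subcontinuum of the surface $M$ contained in the interior of the disk $N$, Corollary~\ref{jaime2} (with $\check H^2(K)=0$ automatic for a continuum in a surface, and $\check H^1(K)$ finitely generated of rank $0$) tells us that $K$ has the shape of a wedge of $0$ circumferences, i.e., the shape of a point. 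This establishes the first assertion.

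For the fixed point, the strategy is to use that the Conley index / Euler characteristic is computed from the isolating block $N=D^2$ together with its exit set $N^o$. Since $N$ is a disk, $N^o$ is a disjoint union of finitely many closed arcs in $\partial N\cong S^1$ (it cannot contain the whole circle $\partial N$, since then $N^- $ would meet $K$ giving the full invariant set, but more to the point the exit set of a disk block omitting at least one boundary point), so $N/N^o$ has the homotopy type of a point or of a wedge of spheres/intervals whose reduced Euler characteristic is easily computed; in any case the Euler characteristic of the pointed space $(N/N^o,[N^o])$ is that of the Conley index $h(K)$. I would then use the fact that for a flow on a disk block the fixed point index of $K$ equals $\chi(h(K))$ (this is the standard relation between the Conley index and the fixed point index, valid since the flow near $K$ is topologically equivalent to a $C^1$ flow by Lemma~\ref{smooth}), and show this number is nonzero. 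Concretely, if $N^o$ has $k\geq 1$ components (arcs), then $N/N^o\simeq\bigvee_{k-1} S^1$ if $k\geq 1$ after collapsing... — here I would instead argue more cleanly: $\check H^*(N,N^o)$ is computed from the long exact sequence of the pair $(D^2,N^o)$ with $N^o$ a disjoint union of $k$ arcs, giving $\check H^0(N,N^o)=0$, $\check H^1(N,N^o)\cong\mathbb{Z}_2^{k-1}$, $\check H^2(N,N^o)=0$, hence $\chi(h(K))=1-(k-1)=2-k$; and separately one shows $k=1$ is forced (an isolating block that is a disk with more than one exit arc would have an entrance set $n^-$ with too many components or would violate connectedness of $K$ via Lemma~\ref{general}), so $\chi(h(K))=1\neq 0$, whence $K$ contains a fixed point.

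The main obstacle I expect is the bookkeeping that pins down $k=1$ (equivalently, that the exit set of a disk block of a continuum must be a single arc) and the precise justification that the Euler characteristic of the Conley index equals the fixed point index in $N$ — the latter needs Lemma~\ref{smooth} to reduce to a $C^1$ flow and then a standard but not entirely trivial theorem (Conley–Zehnder / McCord) relating $\chi(h(K))$ to the sum of fixed point indices inside an isolating neighborhood. An alternative, perhaps cleaner route that avoids classifying the exit set: since $K$ has trivial shape it has trivial Čech cohomology, so by a continuity/Mayer–Vietoris argument $K$ is cell-like; a cell-like set in a surface that is invariant and isolated, if it contained no fixed point, would (by Lemma~\ref{smooth}) lie in a $C^1$ flow where a nonempty invariant set without fixed points on a disk contradicts the Poincaré–Bendixson theorem applied inside $N$ (a nonempty compact invariant set in the plane disk with no fixed point must contain a periodic orbit, whose interior disk is then also invariant, forcing a fixed point by Brouwer — contradiction with $K$ isolated and acyclic). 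I would present the Euler-characteristic argument as the main line and mention the Poincaré–Bendixson route as a remark.
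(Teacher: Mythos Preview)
Your argument for trivial shape is exactly the paper's: Theorem~\ref{polyhedron} gives $\beta_1(K)\le\beta_1(N)=0$ and Corollary~\ref{jaime2} finishes.

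Your main line for the fixed point, however, has a genuine gap. The claim that the exit set of a disk block must consist of a single arc ($k=1$) is simply false, and Lemma~\ref{general} does not force it. The standard counterexample is a hyperbolic saddle: $K$ is a single fixed point, the disk block has two exit arcs and two entrance arcs, $n^{-}$ has two components (one in each exit arc), and nothing in Lemma~\ref{general} is violated. In that case $\chi(h(K))=0$, so the implication ``$\chi(h(K))\neq 0\Rightarrow$ fixed point'' gives no information even though a fixed point is present. More generally, for a disk block with $k$ exit arcs one gets $\chi(h(K))=2-k$, which vanishes precisely when $k=2$; so an Euler-characteristic/fixed-point-index argument alone cannot settle the corollary. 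You also dismissed too quickly the case $N^{o}=\partial N$ (a point repeller is perfectly possible) and the case $N^{o}=\emptyset$ (a point attractor).

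The route you relegated to a remark is the one the paper actually uses, and it is the correct main argument: embed the disk $N$ in $\mathbb{R}^{2}$, use Lemma~\ref{smooth} to assume the flow on $\mathring{N}$ extends to a $C^{1}$ flow on the plane, and apply Poincar\'e--Bendixson to $\omega(x)$ for some $x\in K$. Either $\omega(x)$ contains a fixed point, or it is a limit cycle; in the latter case the bounded Jordan region it encloses lies in $\mathring{N}$, is invariant, and Brouwer's theorem produces a fixed point in $K$. Promote this to your primary proof and drop the $k=1$ claim.
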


\begin{proof}
Since $\beta_1(N)=0$, Theorem~\ref{polyhedron} guarantees that $\beta_1(K)=0$ and, hence, Theorem~\ref{jaime2} ensures that $K$ has trivial shape. Let us see that $K$ must contain a fixed point. Since $K$ admits an isolating block $N$ which is a disk, this disk can be embedded into $\mathbb{R}^2$ and, by the arguments presented in the proof of Lemma~\ref{smooth}, we may assume, without loss of generality, that the flow restricted to $\mathring{N}$ can be extended to a $C^1$ flow on the whole plane. This fact allows us to use Poincar\'e-Bendixson Theorem. Choose a point $x\in K$, hence $\emptyset\neq\omega(x)\subset K$ and either it contains a fixed point or it is a limit cycle. If $\omega(x)$ is a limit cycle, it must decompose $\mathbb{R}^2$ into two connected components, and, since $\mathring{N}$ is an open disk, the bounded component $U$ must be contained in $\mathring{N}$. Thus, $\overline{U}$ is an invariant disk contained in $\mathring{N}$ and, hence, in $K$, and the Brouwer fixed point theorem combined with the compact character of $\overline{U}$ ensure that $K$ must contain a fixed point.
\end{proof}

\begin{remark}
Theorem~\ref{polyhedron} does not hold for flows on higher-dimensional manifolds. For instance, consider on $\mathbb{R}^3$ the vector field
\[
X(x,y,z)=\Phi(x,y,z)\vec{e}_3,
\]
where $\vec{e}_3=(0,0,1)$ and $\Phi:\mathbb{R}^3\to\mathbb{R}$ is a $C^\infty$ function which takes the value $0$ exactly in those points which belong to the subset
\[
H=\bigcup_{n\in\mathbb{N}}\left\{(x,y,z)\in\mathbb{R}^3\; \vrule\; \left(x-\frac{1}{n}\right)^2+y^2=\frac{1}{n^2},\; z=0\right\},
\]
and it takes the value $1$ outside a neighborhood of $H$. The flow induced by $X$ is depicted in figure~\ref{fig:1} and it has the set $H$, which is known as the Hawaaian earring, as an isolated invariant set. It is clear that $H$ admits an isolating block which is a ball but, in spite of it, $\beta_1(H)=\infty$. In particular, $H$ does not have polyhedral shape.
\begin{figure}[h]
\center
\includegraphics[scale=.7]{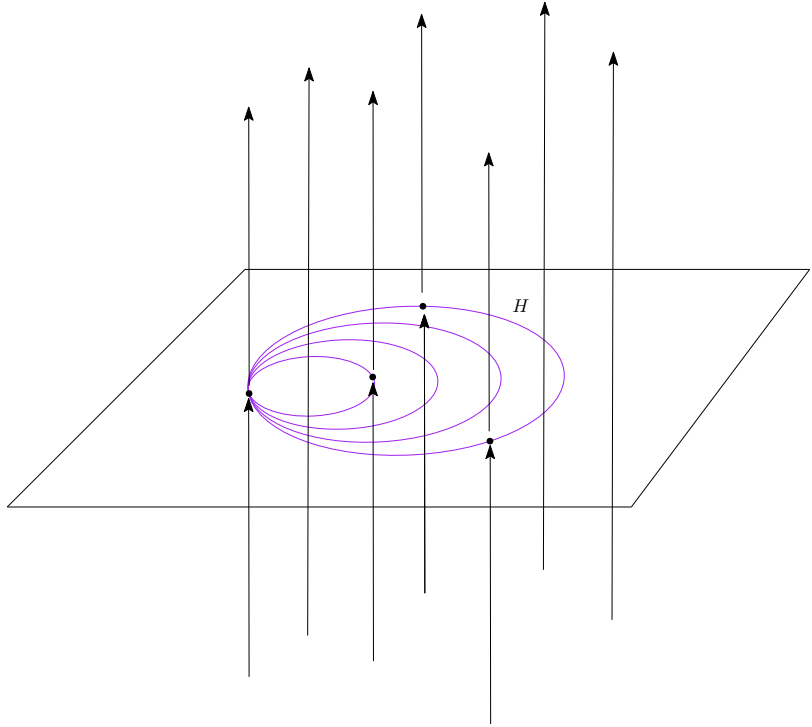}
\caption{Flow on $\mathbb{R}^3$ having the Hawaaian earring as an isolated invariant set.}
\label{fig:1}
\end{figure}

This example is a particular instance of a general result from \cite{giraldosome2001} which states that any finite dimensional compactum can be an isolated invariant set of a flow on some $\mathbb{R}^n$. This example also shows  that in higher-dimensional manifolds, given a connected isolating block manifold $N$ of an isolated invariant continuum $K$, $\beta_1(K)$ may be greater than $\beta_1(N)$. In \cite{gabites} some conditions involving $\beta_1(N)$ are used to find lower bounds of $\beta_1(K)$ for flows on 3-manifolds.
\end{remark}

\section{Regular isolating blocks and the Conley index}\label{sec:2}
In this section we will see that the knowledge of the first Betti number of an isolated invariant continuum of a flow on a surface and the topology of an initial section of its truncated unstable manifold allow us to compute its Conley index, extending in this way a result of \cite{bargeunstable2014} about planar isolated invariant continua. For this purpose we will make use of a special kind of isolating blocks, the so-called \emph{regular isolating blocks}. This kind of blocks was first introduced and studied by Easton in \cite{Easton} and subsequently studied by Gierzkiewicz and W\'ojcik \cite{Wojcik} and J.J. S\'anchez-Gabites \cite{gabites,Gabites_Aplicaciones_2009}. However, most of the known results are referred to the 3-dimensional case and the more general results, which appear in \cite{Wojcik}, do not apply to the kind of isolating blocks considered here since we are dealing with a more restrictive definition of isolating block. We will dedicate part of this section to fill this gap and prove that isolated invariant continua of flows on surfaces admit a basis of regular isolating blocks.

\begin{definition}\label{regular block}
A connected isolating block manifold $N$ is said to be \emph{regular} if the inlcusion $i:K\hookrightarrow N$ is a shape equivalence.
\end{definition}

\begin{remark}
Notice that the condition for an isolating block to be regular in Definition~\ref{regular block} differs from the one introduced and studied in \cite{Easton, Wojcik}. However, from the considerations made in the Introduction it follows that for connected isolating block manifolds in surfaces both definitions agree. In addition, it also follows that all regular isolating blocks of the same isolated invariant continuum must be homeomorphic. This facts also hold in 3-manifolds \cite{gabites,Gabites_Aplicaciones_2009}.
\end{remark}

\begin{theorem}\label{blockbasis}
If $K$ is an isolated invariant continuum of a flow on a surface, it admits a basis of regular isolating blocks.
\end{theorem}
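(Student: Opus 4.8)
The plan is to start from the basis of isolating block manifolds provided by Lemma~\ref{smooth} and to show that, inside any prescribed neighborhood of $K$, one can produce a connected isolating block manifold $N$ for which the inclusion $K\hookrightarrow N$ is a shape equivalence. By Corollary~\ref{jaime2} (and the remarks following Theorem~\ref{jaime1}) it suffices to arrange that $i^*\colon\check{H}^1(N)\to\check{H}^1(K)$ is an isomorphism; by Theorem~\ref{polyhedron} this map is always surjective, so the whole problem reduces to killing the kernel, i.e. to decreasing $\beta_1(N)$ until it equals $\beta_1(K)$. The strategy is therefore: (1) take any connected isolating block manifold $N_0\subset U$; (2) if $\beta_1(N_0)=\beta_1(K)$ we are done; (3) otherwise exhibit a surgery/modification that produces a strictly smaller isolating block manifold $N_1\subset N_0$ with $\beta_1(N_1)<\beta_1(N_0)$ and $\beta_1(N_1)\ge\beta_1(K)$, and iterate, the process terminating because $\beta_1$ is a non-negative integer.

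The key step is the surgery in (3). Since $N_0$ is a compact surface with boundary with $\beta_1(N_0)>\beta_1(K)$, the surjection $\check{H}^1(N_0)\to\check{H}^1(K)$ has nontrivial kernel, and I would extract from this a simple closed curve or a properly embedded arc $c$ in $N_0$ that is nontrivial in $H_1(N_0)$ (resp. $H_1(N_0,\partial N_0)$) but whose class dies in $N_0$ rel a smaller block; concretely, $c$ should be choosable to miss $K$, since $K$ has the shape of a wedge of $\beta_1(K)$ circles sitting in $N_0$ and the ``extra'' generators of $H_1(N_0)$ can be represented away from $K$ (this is where Alexander duality and the structure of $N-K$ from the proof of Theorem~\ref{polyhedron} enter). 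Cutting $N_0$ along such a curve, or equivalently removing a collar neighborhood of it and re-closing, lowers the first Betti number. To keep the result an isolating block I would perform this modification compatibly with the flow: work in the $C^1$ model given by Lemma~\ref{smooth}, choose the cutting curve transverse to the exit/entrance structure using the exit-time and entrance-time functions $t^s,t^i$, and then re-smooth the boundary to recover conditions (1)--(3) of the definition of isolating block, shrinking slightly if necessary (Conley's theorem on existence of blocks inside any isolating neighborhood, already invoked in Lemma~\ref{smooth}, lets me replace the modified neighborhood by a genuine isolating block manifold inside it).

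An alternative, and perhaps cleaner, route avoids explicit surgery: given the connected isolating block manifold $N_0\subset U$, consider a regular neighborhood $V$ of $K$ inside $\mathring{N_0}$ that is a connected surface with $i\colon K\hookrightarrow V$ a shape equivalence — such $V$ exists because $K$ has polyhedral shape (Theorem~\ref{polyhedron}) and finite polyhedra embedded in surfaces have regular neighborhoods whose inclusion is a homotopy, hence shape, equivalence. Then take an isolating block manifold $N_1$ with $K\subset \mathring{N_1}\subset N_1\subset \mathring{V}$, again by Lemma~\ref{smooth}. The inclusions $K\hookrightarrow N_1\hookrightarrow V$ compose to a shape equivalence and the inclusion $K\hookrightarrow V$ is a shape equivalence, so on $\check{H}^1$ the map $\check{H}^1(V)\to\check{H}^1(N_1)$ is injective while $\check{H}^1(N_1)\to\check{H}^1(K)$ is surjective (Theorem~\ref{polyhedron}); combined with $\check{H}^1(V)\cong\check{H}^1(K)$ and finite-dimensionality this forces $\beta_1(N_1)=\beta_1(K)$, hence $K\hookrightarrow N_1$ is a shape equivalence by Theorem~\ref{jaime1} and its corollary, i.e. $N_1$ is a regular isolating block contained in $U$.

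The main obstacle is ensuring in either approach that the modified neighborhood genuinely remains (or can be shrunk to) an isolating block \emph{manifold} with the correct entrance/exit decomposition — that is, controlling the interaction between the purely topological reduction of $\beta_1$ and the flow-transversality conditions (2)--(3) and the submanifold conditions $\partial N^i=\partial N^o=N^i\cap N^o$. In the first approach this is the smoothing/transversality bookkeeping after cutting; in the second approach it is the claim that $K$ admits an arbitrarily small surface neighborhood $V$ onto which it is a shape equivalence, which I would justify via the polyhedral shape of $K$, the fact that $\check{H}^2(K)=0$ and $\check{H}^1(K)$ is finite (so $K$ has the shape of a wedge of circles, hence so does any sufficiently small regular neighborhood of a polyhedron realizing this shape), together with the uniqueness-up-to-homeomorphism statement for shape-equivalent submanifold neighborhoods quoted from \cite{Gabites_Aplicaciones_2009} in the Introduction. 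I would present the second approach as the main argument, since it isolates the topological input cleanly and defers all flow-specific work to the already-established Lemma~\ref{smooth}.
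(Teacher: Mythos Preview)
Your main (second) approach has a genuine gap. From the factorization $\check H^1(V)\xrightarrow{a}\check H^1(N_1)\xrightarrow{b}\check H^1(K)$ with $b\circ a$ an isomorphism you correctly get that $a$ is injective and $b$ is surjective, but the step ``combined with $\check H^1(V)\cong\check H^1(K)$ and finite-dimensionality this forces $\beta_1(N_1)=\beta_1(K)$'' is false linear algebra: take $\mathbb Z_2\hookrightarrow\mathbb Z_2^{\,2}\twoheadrightarrow\mathbb Z_2$, $x\mapsto(x,0)$, $(x,y)\mapsto x$. Geometrically, nothing in Lemma~\ref{smooth} or in the Conley--Easton construction prevents the block $N_1\subset\mathring V$ from acquiring extra handles, i.e.\ having $\beta_1(N_1)>\beta_1(V)$. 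Your first approach hides the same defect: after cutting along a curve you fall back on ``shrinking slightly if necessary'' to recover an isolating block manifold, but that uncontrolled shrinking (again via Lemma~\ref{smooth}) may reintroduce the topology you just removed, so the induction need not progress. A secondary issue is that the existence of the surface neighborhood $V$ with $K\hookrightarrow V$ a shape equivalence is justified by ``$K$ has polyhedral shape, and polyhedra have regular neighborhoods'', but $K$ is not a polyhedron, only shape-equivalent to one, so regular-neighborhood theory does not apply directly.

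The paper closes the gap by never leaving the category of isolating block manifolds. It first rewrites the obstruction to regularity as $\check H^1(N,K)\cong\check H^1(N^o,n^-)\cong H_0(N^o\!\setminus n^-,\partial N^o)$, so each generator is represented by a concrete component of $N^o\setminus n^-$ (coming either from a circle of $N^o$ not contained in $n^-$, or from an interval of $N^o$ meeting $n^-$ in more than one component). For each such component one picks a point $x_0\in N^o\setminus n^-$, uses the flow-box of Lemma~\ref{general}(c) to identify the union of trajectory segments $x[t^i(x),0]$ over a small arc $U\ni x_0$ with a square $[0,1]^2$, and deletes from $N$ a parabolic sub-region of this square. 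The resulting set is \emph{manifestly} still a connected isolating block manifold (the new boundary arcs are transverse to the flow by construction), has the same $n^-$, and has exactly one fewer generator in $H_0(N^o\!\setminus n^-,\partial N^o)$. Finitely many such steps produce a regular block. The missing idea in your proposal is precisely this: the surgery must be performed on the block itself, guided by the flow via Lemma~\ref{general}(c), so that the output is already an isolating block manifold with strictly smaller obstruction and no ``re-block'' step is needed.
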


\begin{proof}
 Let $N$ be a connected isolating block manifold of $K$. From the proof of Theorem~\ref{polyhedron} we have that the sequence
\[
0\to \check{H}^1(N,K)\to\check{H}^1(N)\to\check{H}^1(K)\to 0,
\]
is exact and, as a consequence, from Theorem~\ref{jaime1}, the obstruction for $N$ to be a regular block is the existence of non-trivial elements in $\check{H}^1(N,K)$. On the other hand, as we have seen in the proof of Lemma~\ref{general}, $\check{H}^1(N,K)\cong\check{H}^1(N^o,n^-)$ and, by Alexander duality, we get 
\[
\check{H}^1(N^o,n^-)\cong H_0(N^o-n^-,\partial N^o).
\]
Notice that $H_0(N^o-n^-,\partial N^o)$ is finitely generated. We will construct the desired block from $N$ by cutting from it the leftover information in the following way:

 Assume that $C$ is a circular component of $N^o$ not contained in $n^-$. Each component of $C-n^-$ represents a generator of $H_0(N^o-n^-,\partial N^o)$ since it does not contain points of $\partial N^o$. Choose a point $x_0\in (C-n^-)$ and a compact and connected neighborhood $U$ of $x_0$ in $C$ disjoint from $n^-$. Notice that $U$, being a proper nondegerate subcontinuum of the circle must be homeomorphic to the unit interval $[0,1]$. Thus, Lemma~\ref{general} guarantees that the set
\[
W=\bigcup_{x\in U}x[t^i(x),0],
\] 
is homeomorphic to the unit square $[0,1]\times [0,1] $ via a homeomorphism $h:W\to[0,1]\times [0,1]$ which carries each segment of trajectory $x[t^i(x),0]$ to $\{g(x)\}\times [0,1]$, where $g:U\to[0,1]$ is a homeomorphism. Now we will perform the following operation: choose in $[0,1]\times [0,1]$ the parabolic segments $\alpha$ and $\beta$ depicted in figure~\ref{fig:2} and let $R$ be the open region between these curves in $[0,1]\times[0,1]$. Then, if we consider
 \begin{figure}
\center
\includegraphics[scale=1]{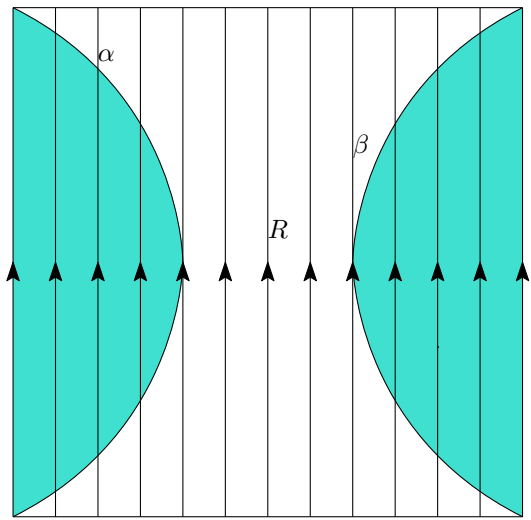}
\caption{The curves $\alpha$ and $\beta$ and the region $R$ in $[0,1]\times [0,1]$.}
\label{fig:2}
\end{figure}
$N_{(1)}=N-h^{-1}(R)$, it is clear from the construction that it is a connected isolating block manifold. Notice that this operation keeps $n^-$ unaltered. Moreover, the number of boundary components has been reduced by $1$ since the component $C$ has been joined with a component of $N^i$, which lies in a different component of $\partial N$. As a consequence, $C$ becomes an interval, say $J$, and $J-n^-_{(1)}$  has one more component than $C-n^-$. However, $J$ must contain two points of $\partial N^o$, each one lying in a different component of $J-n^-_{(1)}$ and, thus, the homology group $H_0(N_{(1)}^o-n_{(1)}^-,\partial N_{(1)}^o)$ has exactly one generator less than $H_0(N^o-n^-,\partial N^o)$. After performing this operation to each circular component of $N^o$ not contained in $n^-$ we obtain a connected isolating block manifold $N_{(r)}$ such that, all the circular components of $N^o_{(r)}$ are contained in $n^-_{(r)}$.

We will denote $N_{(r)}$ by $N$ since it should not lead to confusion. Choose a component $J$ of $N^o$ which contains more than one component of $n^-$. Then, $J$ must be an interval. Thus, each component of $J-n^-$ not containing one of the endpoints represents a generator of $H_0(N^o-n^-,\partial N^o)$. Choose an orientation in $J$ and let $n^-_1$ and $n^-_2$ be the first and the second components of $n^-$ appeared regarding the chosen orientation. Choose a point in the interval $J$ lying between $n^-_1$ and $n^-_2$ and perform the previously described operation. We obtain in this way a new isolating block manifold $N_{(1)}$ in which the component $J$ has been splitted into two disjoint exit intervals, one of them containing $n^-_1$ and the other containing remaining components of $n^-$ which were contained in the original $J$. Notice that $N_{(1)}$ is also connected since, if not, $K$ and one of the chosen components of $n^-$ should lie in different components of $N_{(1)}$ and this cannot happen. If we perform this operation until we separate all the components of $n^-$ (i.e. a finite number of times) we get the desired block.  
\end{proof}

\begin{definition}
A non-empty continuum $K$ contained in a surface is said to be \emph{orientable} if it admits a basis of neighborhoods comprised of orientable surfaces. Otherwise $K$ is said to be \emph{nonorientable}.
\end{definition}

\begin{remark}
From the proof of Lemma~\ref{smooth} it follows that any continuum in the interior of a surface has a neighborhood basis comprised of compact and connected 2-manifolds with boundary. Combining this with the fact that an orientable 2-manifold cannot contain a nonorientable one it follows
\begin{enumerate} 
\item[i)] Every continuum contained in the interior of an orientable surface must be orientable. 

\item[ii)] An orientable continuum $K$ cannot possess a basis of neighborhoods comprised of nonorientable manifolds.

\item[iii)] A nonorientable continuum $K$ must admit a basis of neighborhoods comprised of nonorientable surfaces.
\end{enumerate}   
\end{remark}

However as the next example points out, nonorientable surfaces contain both orientable and nonorientable compact subsets.

\begin{example}
Consider $M$ as the surface obtained as a connected sum of the torus $S^1\times S^1$ with the Klein bottle $\mathcal{K}$ (which is homeomorphic to a connected sum of four projective planes \cite{Massey}). In this surface we can find two copies of $S^1\vee S^1$ as the 1-skeleton of the torus and the Klein bottle summands respectively. It is clear that the one contained in the torus summand is orientable while the other is not.   
\end{example}

Now we are ready to prove the main result of this section.

\begin{theorem}\label{main}
Suppose $K$ is an isolated invariant continuum of a flow $\varphi :
M\times\mathbb{R} \rightarrow M$ defined on a surface. Let $u$ be the number of components of an initial section $S$ of the truncated unstable manifold and  $u_c$ the number of contractible components of $S$. Then 
\begin{enumerate}
\item[i)] If $K$ is neither an attractor, nor a repeller the Conley index of $K$ is the pointed homotopy type of $\left( \bigvee_{i=1,\ldots,k}S_{i}^{1},\ast \right) ,$
where $k=\beta_1(K)+u_{c}-1$ and $S_{i}^{1}$ is a pointed $1-$sphere based on $\ast $
for $i=1,\ldots,k.$

\item[ii)] If $K$ is an attractor, $u=0$ and its Conley index is the pointed
homotopy type of $\left( \bigvee_{i=1,\ldots,\beta_1(K)}S_{i}^{1}\cup \{\bullet
\},\bullet \right) ,$ where the $S_{i}^{1}$ are pointed $1-$spheres based on 
$\ast $ and $\bullet $ denotes a point not belonging to $\bigvee_{i=1,\ldots,\beta_1(K)}S_{i}^{1}.$

\item[iii)] If $K$ is a repeller:
\begin{enumerate}
\item If $K$ is orientable its Conley index is the pointed
homotopy type of $\left( \Sigma_g\bigvee \left(
\bigvee_{i=1,\ldots,u-1}S_{i}^{1}\right) ,\ast \right) $, where $\Sigma_g$ is a closed orientable surface of genus $g=\frac{1+\beta_1(K)-u}{2}$. The surface $\Sigma_g$  and all the $S_{i}^{1}$ are pointed and based on $\ast.$
\item If $K$ is nonorientable its Conley index is the pointed
homotopy type of $\left(N_g\bigvee \left(\bigvee_{i=1,\ldots,u-1}S_{i}^{1}\right) ,\ast \right)$, where $N_g$ is a closed nonorientable surface of genus $g=1+\beta_1(K)-u$. The surface $N_g$  and all the $S_{i}^{1}$ are pointed and based on $\ast.$
\end{enumerate}
\end{enumerate}
\end{theorem}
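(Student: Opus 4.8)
The plan is to compute $h(K)=(N/N^o,[N^o])$ using a \emph{regular} isolating block $N$ supplied by Theorem~\ref{blockbasis} — concretely the one produced by the construction in its proof. Such an $N$ is a compact connected surface with nonempty boundary (nonempty since $K\subsetneq M$ and $N$ is isolating), it has $\beta_1(N)=\beta_1(K)$ by regularity, and by the construction each component of $N^o$ contains exactly one component of $n^-$, while every circular component of $N^o$ is an entire boundary circle of $N$ lying in $n^-$. The first preliminary fact I would record is $n^-\subseteq N^o$: a point of $n^-=N^-\cap\partial N$ has its whole backward semiorbit in $N$, so it cannot lie in the entrance set $N^i$. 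Since $n^-\cong S$, this makes the circular components of $N^o$ correspond to the non-contractible components of $n^-$, so there are $u-u_c$ of them and the remaining $u_c$ components of $N^o$ are arcs; in particular $N^o$ has exactly $u$ components. I would then split into three cases according as $N^o$ is empty, equals $\partial N$, or is a nonempty proper subset of $\partial N$, and check that these are precisely the cases $K$ attractor, $K$ repeller, and neither.

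For the attractor case, since backward orbits move away from an asymptotically stable set we get $W^u(K)=K$, hence the truncated unstable manifold is empty, $S=\emptyset$, $u=0$, so $n^-=\emptyset$ and Lemma~\ref{general}(a) forces $N^o=\emptyset$; thus $h(K)$ is the homotopy type of $N$ with a disjoint basepoint $\bullet$ adjoined, and a surface with nonempty boundary is homotopy equivalent to $\bigvee_{i=1}^{\beta_1(K)}S^1_i$, giving (ii). For the repeller case, dually $W^s(K)=K$, hence $n^+=N^+\cap\partial N=\emptyset$, and the dual of Lemma~\ref{general}(a) forces $N^i=\emptyset$, so $\partial N=N^o$; since no orbit enters $N$, $N$ is negatively invariant and $N^-=N$, so $n^-=\partial N$, all $u$ components are circles, $u_c=0$, and $N$ has exactly $u$ boundary circles. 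Capping each with a disc yields a closed surface $\widehat N$; as $K$ is orientable iff it admits an orientable surface neighbourhood iff the regular block $N$ is orientable, $\widehat N$ is $\Sigma_g$ with $2g+u-1=\beta_1(K)$ in the orientable case and $N_g$ with $g+u-1=\beta_1(K)$ in the nonorientable case, giving the stated genera. Finally $h(K)=(N/\partial N,\ast)$, and collapsing each boundary circle is homotopy equivalent to capping it, after which identifying the resulting $u$ points to one wedges on $u-1$ circles, so $N/\partial N\simeq\widehat N\vee\bigvee_{i=1}^{u-1}S^1_i$, which is (iii).

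The main case is (i), with $N^o$ a nonempty proper subset of $\partial N$. Here $N^o$ is the disjoint union of $u_c$ arcs and $u-u_c$ entire boundary circles $C_1,\dots,C_{u-u_c}$ of $N$; crucially, because $N^o\subsetneq\partial N$ some boundary circle of $N$ meets $N^i$, so the $C_i$ form a \emph{proper} subfamily of the boundary circles of $N$. Capping $C_1,\dots,C_{u-u_c}$ with discs then produces a surface $N'$ still having nonempty boundary, with $\beta_1(N')=\beta_1(N)-(u-u_c)=\beta_1(K)-u+u_c$. I would compute $N/N^o$ up to homotopy as follows: collapsing the contractible arcs of $N^o$ changes nothing, collapsing each $C_i$ is homotopy equivalent to capping it, and identifying the resulting $u$ points to a single one wedges on $u-1$ circles; hence $N/N^o\simeq N'\vee\bigvee_{i=1}^{u-1}S^1_i\simeq\bigvee_{i=1}^{\beta_1(K)-u+u_c}S^1_i\vee\bigvee_{i=1}^{u-1}S^1_i=\bigvee_{i=1}^{\beta_1(K)+u_c-1}S^1_i$, using again that a surface with nonempty boundary is a wedge of $\beta_1$ circles. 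This is (i) with $k=\beta_1(K)+u_c-1$, which is $\geq 0$ since if $u_c=0$ then the $u\geq 1$ exit circles force $\beta_1(N)\geq u\geq 1$.

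The step I expect to be the main obstacle is pinning down the combinatorics of the exit set of the regular block in case (i): that $N^o$ has exactly $u$ components, that precisely $u_c$ of them are arcs and $u-u_c$ are entire boundary circles, and — above all — that those circles form a \emph{proper} subfamily of the boundary circles of $N$. The first two points require carefully tracking the construction in the proof of Theorem~\ref{blockbasis} (its first phase forces every circular exit component into $n^-$, its second phase makes the components of $N^o$ and of $n^-$ correspond bijectively) together with the inclusion $n^-\subseteq N^o$; the properness, which is exactly what separates case (i) from the repeller case and is responsible for the index being a plain wedge of circles rather than involving a closed surface, comes from $N^i\neq\emptyset$. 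The rest reduces to two standard homotopy facts — collapsing a boundary circle of a surface is the same up to homotopy as capping it with a disc, and identifying $m$ points of a connected CW complex is the same as wedging on $m-1$ circles — and to the bookkeeping $\beta_1(\Sigma_g\text{ with }b\text{ boundary circles})=2g+b-1$, and $g+b-1$ in the nonorientable case.
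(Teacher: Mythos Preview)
Your proof is correct and follows the same overall strategy as the paper: take a regular isolating block $N$, determine the structure of $N^o$, and compute $N/N^o$ by collapsing arcs and boundary circles. The one substantive difference is in how you establish that $N^o$ has exactly $u$ components, $u_c$ arcs and $u-u_c$ full boundary circles. You do this by tracking the explicit two-phase construction in the proof of Theorem~\ref{blockbasis}; the paper instead gives an intrinsic argument valid for \emph{any} regular block, namely that $\check{H}^*(N^o,n^-)\cong\check{H}^*(N,K)=0$ forces the inclusion $n^-\hookrightarrow N^o$ to induce cohomology isomorphisms, so $N^o\simeq n^-\cong S$. The paper's route is cleaner and does not depend on which regular block was chosen; yours is more hands-on but ties the argument to a particular construction. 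In the final bookkeeping the paper uses the Euler characteristic $\chi(h(K))=\chi(N)-\chi(N^o)$ while you use $\beta_1=2g+b-1$ directly; these are equivalent computations.
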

\begin{proof}

Let $N$ be a regular isolating block of $K$. Then, given an initial section $S$ of the truncated unstable manifold $W^u(K)-K$, $S$ is homotopy equivalent to $N^o$. Indeed, since the inclusion $i:K\hookrightarrow N$ is a shape equivalence, the cohomology groups $\check{H}^k(N,K)=\{0\}$. But, as we have seen before $\check{H}^k(N,K)\cong\check{H}^k(N^o,n^-)$ and, hence, $i:n^-\hookrightarrow N^o$ induces isomorphisms in \v Cech cohomology. It easily follows that $n^-$ and $N^o$ have the same homotopy type and the claim follows $n^-$ being homeomorphic to $S$. 

From this observation we get that $N^o$ has $u_c$ components which are intervals and $u-u_c$ circular components. 

Suppose that $K$ is neither an attractor nor a repeller and let $N$ be a regular isolating block of $K$. The block $N$ is a compact 2-manifold with boundary and, since it has the same shape as $K$ it must have the homotopy type of a wedge of $\beta_1(K)$ circumferences. Collapsing to a point an interval component of $N^o$ does not change the homotopy type of $N$. Therefore, the topological space obtained by collapsing all the interval components to a single point is pointed homotopy equivalent to the wedge sum of $N$ with $u_c-1$ copies of $S^1$. On the other hand, collapsing a circular component $C$ of $N^o$ produces the same effect on $N$ as capping the boundary component $C$ with a disk. Then, the topological space obtained by collapsing to a point all the circle components is pointed homotopy equivalent to a wedge sum of $(u-u_c-1)$ circumferences with the manifold obtained after capping $(u-u_c)$ boundary components with disks. Thus, since $N^o$ is neither empty nor the whole $\partial N$  the Conley index of $K$ must be the pointed homotopy type of a wedge sum of a compact and connected 2-manifold with boundary with some circumferences. Hence, it must be pointed homotopy equivalent to a wedge of circumferences. To determine the number of circumferences on the wedge we compute the Euler characteristic of $h(K)$. Since $\chi(h(K))$ agrees with $\chi(N,N^o)$ and $N^o$ is a union of $u_c$ intervals and $u-u_c$ circumferences it follows 
\[
\chi(h(K))=\chi(N)-\chi(N^o)=1-\beta_1(N)-u_c,
\]  
and, hence, $\rank CH^1(K)=\beta_1(N)+u_c-1$. This proves i).

If $K$ is an attractor it admits a positively invariant isolating neighborhood and, hence, $u=0$. Thus, if $N$ is a regular isolating block it must have empty exit set. As a consequence, the effect of collapsing its exit set $N^o$ to a point is the same as making the disjoint union of $N$ with a singleton not contained in $N$. This proves ii).

Suppose that $K$ is a repeller. Then, given a regular isolating block $N$ of $K$, $N^o$ must be the whole boundary $\partial N$ which is comprised of $u$ connected components. The space obtained after collapsing the whole boundary of $N$ to a point is pointed  homotopy equivalent to the wedge sum of $u-1$ circumferences with the surface obtained after capping all the boundary components of $N$ with disks. This surface is orientable if and only if $K$ is orientable. Indeed, if $K$ is orientable it admits a basis of neighborhoods comprised of orientable 2-manifolds with boundary. As a consequence, $K$ admits an orientable regular isolating block. If $K$ is nonorientable the same argument shows that $K$ admits a nonorientable regular block. Let us compute the genus of $S_g$, the closed surface obtained after capping with a disk each boundary component of $\partial N$.  Since, $\partial N$ has exactly $u$ components, using the fact that
\[
\chi(A\cup B)=\chi(A)+\chi(B)-\chi(A\cap B)
\]
it easily follows that
\[
\chi(S_g)=1-\beta_1(N)+u.
\]
On the other hand,
\[
\chi(S_g)=
\begin{cases}
2-2g & \mbox{if $S_g$ is orientable}\\
2-g & \mbox{otherwise}
\end{cases}
\] 
This proves iii).
\end{proof}

\begin{remark}
Notice that in the item iii) of Theorem~\ref{main} the genus of the surface which appears as a direct summand must be less than or equal to than the genus of the phase space $M$. This can be easily seen using the Mayer-Vietoris sequence. 
\end{remark}

\section{The cohomology index}\label{sec:3}

The aim of this section is to study the cohomology index of an isolated invariant continuum of a flow on a surface and its relations with the Conley index. Since cohomology groups are easier to compute than homotopy type it is interesting to study to what extent the cohomology index determines the Conley index.  

\begin{example}
Let $M$ be an orientable surface of genus greater than or equal to $1$ and consider two  flows $\varphi$ and $\varphi'$ on $M$ having isolated invariant sets $K_1$ and $K_2$ respectively whose local dynamics are depicted in figures \ref{fig:3} and \ref{fig:4}. The Conley indices of $K_1$ and $K_2$ are the pointed homotopy type of $\left(S^2\vee S^1_1\vee S^1_2,*\right)$ and $\left(S^1\times S^1,*\right)$. Then, their cohomology indices agree being

\begin{figure}[h]
\center
\includegraphics[scale=.8]{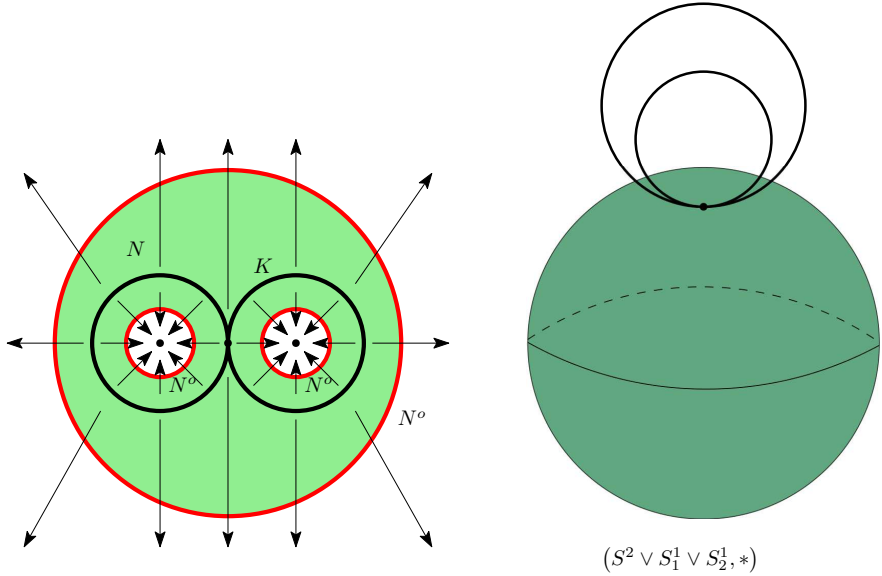}
\caption{Flow having $S^1\vee S^1$ as a repeller whose Conley index is the pointed homotopy type of $\left(S^2\vee S^1_1\vee S^1_2,*\right)$.}
\label{fig:3}
\end{figure}

\begin{figure}
\center
\includegraphics[scale=.7]{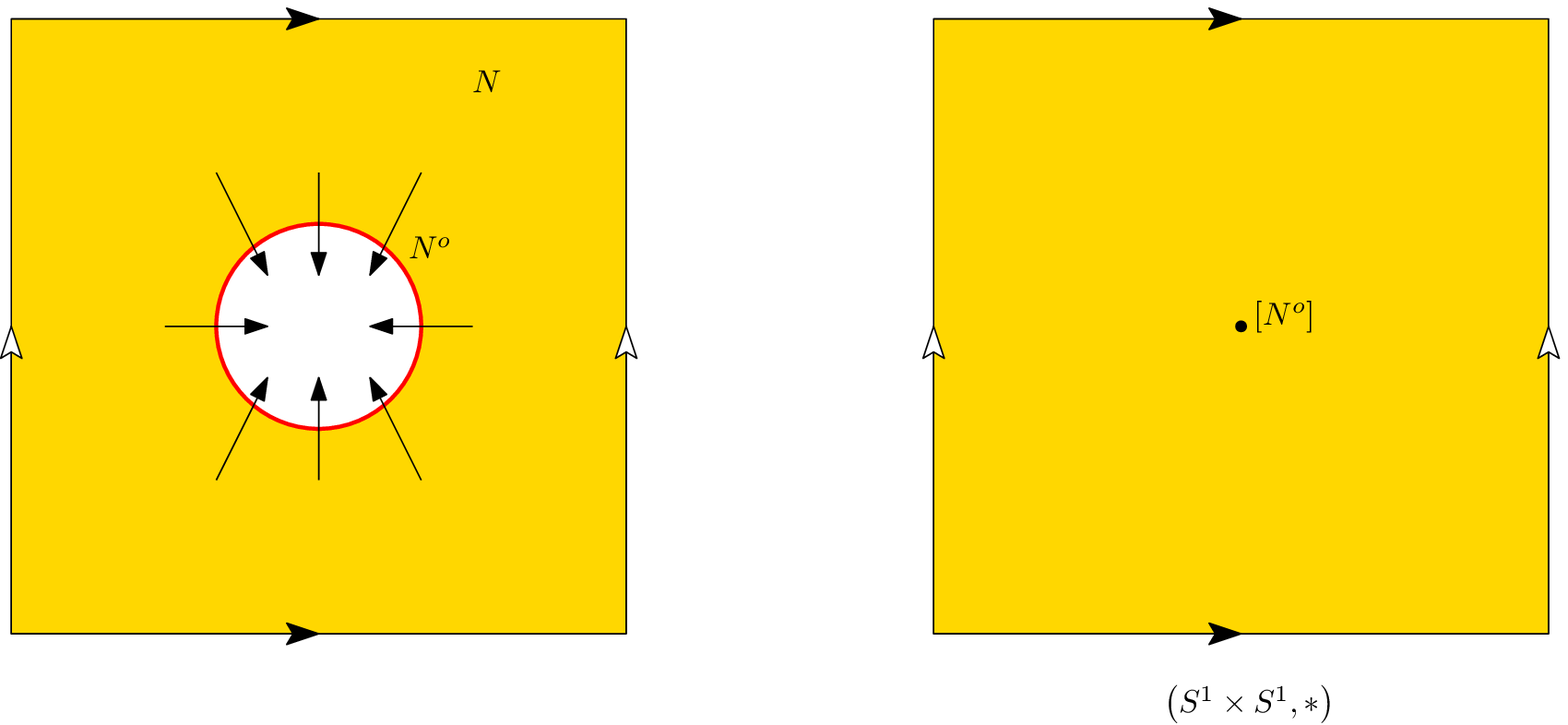}
\caption{Flow having $S^1\vee S^1$ as a repeller whose Conley index is the pointed homotopy type of $\left(S^1\times S^1,*\right)$.}
\label{fig:4}
\end{figure}

\[
CH^i(K_j)=\begin{cases}
\mathbb{Z}_2\oplus\mathbb{Z}_2 & \mbox{if}\quad i=1 \\
\mathbb{Z}_2  & \mbox{if}\quad i=2\\
0  & \mbox{otherwise}
\end{cases}
\]

However, these spaces are not homotopy equivalent. This can be seen using the ring structure of $CH^*(K_1)$ and $CH^*(K_2)$. As rings
\begin{eqnarray*}
CH^*(K_1)&\cong & \widetilde{H}^*(S^2)\oplus\widetilde{H}^*(S^1_1)\oplus\widetilde{H}^*(S^1_2)\\
CH^*(K_2)&\cong & \widetilde{H}^*(S^1\times S^1).
\end{eqnarray*}
Let $\sigma_1$, $\sigma_2$ be elements of $CH^1(K_1)$. Then $\sigma_i=a_i\gamma_1+b_i\gamma_2$, where $\gamma_i$ is the generator of $H^1(S^1_i)$, $i=1,2$. As a consequence, $\sigma_1\smile\sigma_j=0$ since $\gamma_1\smile\gamma_2=0$ by the direct sum structure of $CH^*(K_1)$ and $\gamma_i\smile\gamma_i\in H^2(S^1_i)=0$, $i=1,2$.

On the other hand, if $\alpha$, $\beta$ are the standard generators of $CH^1(K_2)$,  $\alpha\smile\beta$ generates $CH^2(K_2)\cong\mathbb{Z}_2$. Therefore, the rings $CH^*(K_1)$ and $CH^*(K_2)$ are not isomorphic and $h(K_1)\neq h(K_2)$.
\end{example}

The previous example shows that the knowledge of the groups which conform the cohomology index is not enough to know the Conley index. We will see that in spite of it, the cohomology ring $CH^*(K)$ determines the Conley index.

Given a topological space $M$ with $H^2(M)=\mathbb{Z}_2$ it is possible to define a bilinear form
\[
I:H^1(M)\times H^1(M)\to \mathbb{Z}_2,
\]
given by $I(\alpha_1,\alpha_2)=\alpha_1\smile\alpha_2$. This form determines the cohomology ring $H^*(M)$ when $M$ is a closed surface. The rank $I$ is defined as the rank of any matrix representing $I$. This number is well defined since two matrices representing $I$ must be congruent.

\begin{theorem}\label{ring}
Suppose that $K$ is an isolated invariant continum of a flow on a surface. Then, the cohomology ring $CH^*(K)$ determines its Conley index. In particular,
\begin{enumerate}
\item[i)] If $CH^0(K)=CH^2(K)=\{0\}$, then $K$ is neither an attractor nor a repeller and its Conley index is the pointed homotopy type of $\left( \bigvee_{i=1,\ldots,s}S_{i}^{1},\ast \right)$, where $s$ agrees with $\rank CH^1(K)$.

\item[ii)] If $CH^0(K)\neq\{0\}$ then $K$ is an attractor and its Conley index is the pointed homotopy type of $\left( \bigvee_{i=1,\ldots,s}S_{i}^{1}\cup \{\bullet\},\bullet \right)$ where $s$ agrees with $\rank CH^1(K)$. In particular, $K$ has the shape of $s$ circumferences.

\item[iii)] If $CH^2(K)\neq\{0\}$ then $K$ is a repeller and:
\begin{enumerate}
\item If $\alpha\smile\alpha=0$ for each $\alpha\in CH^1(K)$ the Conley index of $K$ is the pointed homotopy type of $\left( \Sigma_g\bigvee \left(
\bigvee_{i=1,\ldots,r}S_{i}^{1}\right) ,\ast \right)$, where $g=\frac{\rank I}{2}$ and $r=\rank CH^1(K)-2g$.  

\item If there exists $\alpha\in CH^1(K)$ such that $\alpha\smile\alpha\neq 0$ the Conley index of $K$ is the pointed homotopy type of $\left( N_g\bigvee \left(
\bigvee_{i=1,\ldots,r}S_{i}^{1}\right) ,\ast \right)$, where $g=\rank I$ and $r=\rank CH^1(K)-g$.
\end{enumerate}
In both cases the number of components of an initial section $S$ of the truncated unstable manifold is $r+1$ and $K$ has the shape of $\rank CH^1(K)$ circumferences.
\end{enumerate}
\end{theorem}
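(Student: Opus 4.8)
The plan is to reduce Theorem~\ref{ring} to Theorem~\ref{main} by showing that the cohomology ring $CH^*(K)$ recovers exactly those two pieces of data which, according to Theorem~\ref{main}, determine the Conley index: the first Betti number $\beta_1(K)$ and the number $u$ of components of an initial section $S$ of the truncated unstable manifold (equivalently $u_c$, the number of contractible components of $S$), together with the orientability of $K$ in the repeller case. So the first step is to separate the three regimes by means of $CH^0$ and $CH^2$. From Theorem~\ref{main}, $CH^0(K)\neq\{0\}$ happens precisely in case (ii), i.e.\ $K$ an attractor, since only then does the quotient $N/N^o$ have an isolated base point contributing to $\widetilde H^0$; and $CH^2(K)\neq\{0\}$ happens precisely in case (iii), i.e.\ $K$ a repeller, because only then does $N/N^o$ contain a closed surface as a wedge summand, which is the only source of a $2$-dimensional cohomology class (a wedge of circles has no $H^2$). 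When both vanish we are in case (i). These implications all follow by inspecting the homotopy types listed in Theorem~\ref{main}, so this step is essentially bookkeeping.

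Next I would treat cases (i) and (ii). In case (i), $h(K)$ is a wedge of $s$ circles with $s=\beta_1(K)+u_c-1$, and since a wedge of circles has free cohomology ring with all cup products of positive-degree classes vanishing, $\rank CH^1(K)=\dim H^1=s$; this is exactly the claimed formula. In case (ii), $h(K)$ is a wedge of $\beta_1(K)$ circles together with a disjoint base point, so again $CH^1(K)$ has rank $\beta_1(K)=s$, and Corollary~\ref{jaime2} (via Theorem~\ref{polyhedron}) gives that $K$ has the shape of $s$ circumferences. Here $u=0$, so no further information is needed. Both cases are immediate once the regime has been identified.

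The substantive case is (iii), the repeller. Here $h(K)$ is the wedge of a closed surface $F$ (orientable $\Sigma_g$ or nonorientable $N_g$) with $u-1$ circles. The key point is that the bilinear form $I$ on $CH^1(K)\cong H^1(F)\oplus \mathbb{Z}_2^{\,u-1}$ is concentrated on the $H^1(F)$ summand: cup products involving the circle classes land in $H^2$ of a circle, hence vanish, and also any product of a circle class with an $H^1(F)$ class vanishes by the wedge (direct-sum) structure. Thus $\rank I$ equals the rank of the intersection form of the closed surface $F$, which is $2g$ if $F=\Sigma_g$ and $g$ if $F=N_g$; and the two subcases are distinguished by whether some class squares to something nonzero, since the intersection form of an orientable surface is alternating (every $\alpha\smile\alpha=0$ over $\mathbb{Z}_2$ because it is the mod-$2$ reduction of a skew-symmetric integral form — more directly, $\Sigma_g$'s $\mathbb{Z}_2$-intersection form is a sum of hyperbolic planes) whereas $N_g$'s is not (the generator dual to a one-sided circle squares to the fundamental class). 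From $\rank I$ we read off $g$, hence $\chi(F)$, hence via the formulas $\chi(\Sigma_g)=2-2g$, $\chi(N_g)=2-g$ and the relation $\chi(F)=1-\beta_1(N)+u$ established in the proof of Theorem~\ref{main} we recover $u$, and finally $r=\rank CH^1(K)-\rank I = u-1$ gives the number of circle summands and $r+1=u$ the number of components of $S$. The total $\beta_1$ of $h(K)$ being $\dim CH^1(K)=\rank CH^1(K)$, together with $\beta_1(K)\le\beta_1(N)$ and the shape classification, yields that $K$ has the shape of $\rank CH^1(K)$ circumferences.

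The main obstacle is the purely algebraic claim that over $\mathbb{Z}_2$ one can read off the genus and the orientability type of a closed surface from the cup-product form $I$ alone — i.e.\ that $\rank I$ and the presence or absence of a class with $\alpha\smile\alpha\neq 0$ are complete invariants of the $\mathbb{Z}_2$-cohomology ring of a closed surface. This is classical (the classification of symmetric bilinear forms over $\mathbb{Z}_2$: nondegenerate ones are either a sum of hyperbolic planes or a sum of diagonal $\langle 1\rangle$'s, corresponding respectively to orientable and nonorientable surfaces), but it must be invoked carefully because here $I$ is \emph{degenerate} on the nose — its radical is precisely the span of the $u-1$ circle classes — so one should first quotient by the radical, identify the resulting nondegenerate form with the intersection form of $F$, and only then apply the classification. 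Once that is set up, everything else is a matter of translating between $\chi$, $g$, $u$ and $\beta_1$ exactly as in the proof of Theorem~\ref{main}.
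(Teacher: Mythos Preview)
Your proposal is correct and follows essentially the same route as the paper: reduce to Theorem~\ref{main}, separate the three regimes via $CH^0$ and $CH^2$, and in the repeller case exploit the wedge decomposition $CH^*(K)\cong \widetilde H^*(F)\oplus\bigoplus \widetilde H^*(S^1)$ to see that $I$ is supported on $H^1(F)$ and its rank recovers the genus. The only presentational difference is that the paper simply writes down the block matrix of $I$ in the standard basis $\{\alpha_i,\beta_i,\gamma_j\}$ (resp.\ $\{a_i,\gamma_j\}$) and reads off $\rank I=2g$ (resp.\ $g$) directly, whereas you phrase it as ``quotient by the radical and apply the classification of nondegenerate symmetric forms over $\mathbb{Z}_2$''; both arguments are the same computation, and your more abstract formulation is not needed once one has the explicit matrix.
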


\begin{proof}
Suppose that $CH^0(K)=CH^2(K)=\{0\}$, then, $h(K)$ must be connected and it cannot  contain any closed surface as a wedge summand. Thus, Theorem~\ref{main} ensures that it cannot be an attractor or a repeller and $h(K)$ must be the homotopy type of a wedge of circumferences. It is clear that the number of circumferences in the wedge is determined by $\rank CH^1(K)$. This proves i).

Let us assume that $CH^0(K)\neq\{0\}$. Then $h(K)$ is not connected and by Theorem~\ref{main} it must be an attractor. Moreover, $h(K)$ must have the homotopy type of the union of a wedge of circumferences and an exterior point. As before $\rank CH^1(K)$ determines the number of circumferences in the wedge.

To prove iii) assume that $CH^2(K)\neq\{0\}$, then Theorem~\ref{main} guarantees that $K$ is a repeller. Moreover, $h(K)$ must contain a closed connected surface as a wedge summand. This surface is orientable (and hence $K$ is orientable) if and only if, given any element $\alpha\in CH^1(K)$, $\alpha\smile\alpha=0$. This is a straightforward consequence of the cohomology ring structure of closed surfaces (See \cite{HatcherAT}). 

Suppose that $K$ is orientable. Then $h(K)$ is the pointed homotopy type of $\left(\Sigma_g\bigvee \left(
\bigvee_{i=1,\ldots,r}S_{i}^{1}\right) ,\ast \right)$. Let us show that $g$ is exactly $\rank I/2$. By \cite{HatcherAT} we have that
\begin{equation}
CH^*(K)\cong \widetilde{H}^*(\Sigma_g)\oplus\left(\bigoplus_{i=1}^r \widetilde{H}^*(S^1_i)\right),
\label{eq:1}
\end{equation}
as rings. Choose the basis $\{\alpha_1,\ldots,\alpha_g,\beta_1,\ldots\beta_g,\gamma_1,\ldots,\gamma_r\}$ of $CH^1(K)$ where $\{\alpha_1,\ldots,\alpha_g,\beta_1,\ldots,\beta_g\}$ is the standard basis of $H^1(\Sigma_g)$ and each $\gamma_i$ is  the generator of $H^1(S^1_i)$ for each $i$. Let $\sigma$ be the generator of $CH^2(K)\cong\mathbb{Z}_2$, then

\[
\alpha_i\smile\beta_j= 
\begin{cases}
\sigma & \mbox{if}\quad i=j\\
0\quad \mbox{if}\quad i\neq j
\end{cases}
\]
and $\alpha_i\smile\alpha_j=0$, $\beta_i\smile\beta_j=0$ for each $i,j$. Besides, \eqref{eq:1} ensures that $\gamma_i\smile\omega=0$ for each $i=1,\ldots,r$ and each $\omega\in CH^1(K)$. Therefore, the matrix associated to the bilinear form $I$ with respect to the chosen basis takes the form
\[
  \begin{pmatrix}
   O_g & \vrule &  I_g &\vrule &  
    \\  \cline{1-3}
    I_g  &\vrule & O_g & \vrule & O_{s\times r} \\\cline{1-3} 
    & O_{r\times 2g} & &\vrule &
   \end{pmatrix}
\]
where $I_g$ denotes the order $g$ identity matrix, $O$ denotes the zero matrix of the corresping order and $s=\rank CH^1(K)$. Hence, the rank of $I$ is $2g$ and the result follows.

Suppose that $K$ is nonorientable. In this case $h(K)$ is the pointed homotopy type of $\left(N_g\bigvee \left(
\bigvee_{i=1,\ldots,r}S_{i}^{1}\right) ,\ast \right)$. We see that $g$ agrees with the rank of $I$. Consider the basis $\{a_1,\ldots,a_g,\gamma_1,\ldots,\gamma_r\}$  of $CH^1(K)$ where $\{a_1,\ldots,a_g,\}$ is the standard basis of $H^1(N_g)$ and each $\gamma_i$ is  the generator of $H^1(S^1_i)$ for each $i$. Let $\sigma$ be the generator of $CH^2(K)\cong\mathbb{Z}_2$, then 
\[
a_i\smile a_j= 
\begin{cases}
\sigma & \mbox{if}\quad i=j\\
0\quad \mbox{if}\quad i\neq j
\end{cases}
\]
and, reasoning as before, $\gamma_i\smile\omega=0$ for each $i=1,\ldots r$ and $\omega\in CH^1(K)$. Therefore, the matrix associated to the bilinear form $I$ with respect to the chosen basis takes the form
\[
  \begin{pmatrix}
   I_g & \vrule  &  O_{g\times r}
    \\ \cline{1-3}
 O_{r\times g} &  \vrule & O_{r\times r}
   \end{pmatrix}
\]
Thus, the rank of $I$ is $g$ and the result follows.

Notice that from this discussion it also follows that the cohomology ring $CH^*(K)$ determines $h(K)$ as we wanted to prove.
\end{proof}

\section{Applications}\label{sec:4}

In this section we will show some applications of the previous results. For instance, we will relate the property of being non-saddle with the structure of the truncated unstable manifold, we will study properties of continuations of isolated invariant continua and we will characterize those isolated invariant continua which do not have fixed points.    

The next result shows a duality property of those  isolated invariant continua which are neither attractors nor repellers for flows on surfaces

\begin{proposition}
Suppose that $K$ is an isolated invariant continuum of a flow on a surface. Then, the number of contractible components of an initial section $S$ of the truncated unstable manifold of $K$, $u_c$, agrees with the number of contractible components $s_c$ of a final section $S^*$ of the truncated stable manifold. As a consequence, if $K$ is neither an attractor nor a repeller, the Conley index $h(K)$ agrees with the Conley index for the reverse flow $h^*(K)$. 
\end{proposition}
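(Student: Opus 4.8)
The plan is to read off all the relevant data from a single block. By Theorem~\ref{blockbasis} I would fix a regular isolating block $N$ of $K$; since the defining property ``$i\colon K\hookrightarrow N$ is a shape equivalence'' makes no reference to the flow direction, $N$ is at the same time a regular isolating block for the reverse flow. As in the proof of Theorem~\ref{main}, $n^-=N^-\cap\partial N$ is homeomorphic to an initial section $S$ of $W^u(K)-K$ and, dually, $n^+=N^+\cap\partial N$ is homeomorphic to a final section $S^*$ of the truncated stable manifold. For a regular block the pair $(N,K)$ is \v Cech acyclic, and the isomorphisms $\check{H}^k(N,K)\cong\check{H}^k(N^o,n^-)$ from the proof of Lemma~\ref{general} (together with their duals $\check{H}^k(N,K)\cong\check{H}^k(N^i,n^+)$) force $\check{H}^k(N^o,n^-)=\{0\}=\check{H}^k(N^i,n^+)$ for all $k$. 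As $N^o$ and $N^i$ are finite disjoint unions of arcs and circles, a cohomologically acyclic inclusion into either is a homotopy equivalence; hence each arc component of $N^o$ contains exactly one component of $n^-$ — necessarily an arc or a point, so contractible — while each circle component of $N^o$ is itself a (non-contractible) component of $n^-$, and likewise for $N^i$ and $n^+$. Thus $u_c$ equals the number of arc components of $N^o$ and $s_c$ equals the number of arc components of $N^i$, and it remains to show these two numbers coincide.

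Here I would argue one boundary circle at a time. Recall $N^o\cup N^i=\partial N$ is a disjoint union of circles and $N^o\cap N^i=\partial N^o=\partial N^i=:F$ is finite. First, neither $N^o$ nor $N^i$ has a point (degenerate) component: if $p$ were isolated in $N^o$ it would not lie in the manifold boundary $\partial N^o$, yet the points of $\partial N$ near $p$ lie in $N^i$, so $p\in N^i\cap N^o=\partial N^o$, a contradiction. Now fix a boundary circle $C$ of $N$. If $C\subset N^o$, then no point of $C$ lies in $\partial N^o$, so $C\cap F=\emptyset$ and $C$ contributes an arc to neither $N^o$ nor $N^i$; symmetrically if $C\subset N^i$. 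Otherwise $C$ is mixed: $C\setminus F$ is a nonempty finite union of open arcs, each contained in the relative interior of $N^o$ or of $N^i$ but not of both (a transition point would lie in $F$), and two arcs abutting at a point of $F$ must carry opposite labels, for otherwise that point would be relatively interior to $N^o$ (or to $N^i$) and hence not in $\partial N^o=F$. So on a mixed circle the arcs of $N^o$ and of $N^i$ alternate, and there are equally many of each, namely $\tfrac{1}{2}|C\cap F|$. Summing over all boundary circles of $N$ yields that $N^o$ and $N^i$ have the same number of arc components, i.e. $u_c=s_c$. (Alternatively one verifies $\chi(N^o\cap C)=\chi(N^i\cap C)$ for every boundary circle $C$ and sums to obtain $\chi(N^o)=\chi(N^i)$, whence $u_c=\chi(N^o)=\chi(N^i)=s_c$.)

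For the consequence, suppose $K$ is neither an attractor nor a repeller of $\varphi$. Since an attractor of $\varphi$ is exactly a repeller of the reverse flow and conversely, $K$ is also neither an attractor nor a repeller of the reverse flow, so Theorem~\ref{main}(i) applies to both flows. It gives $h(K)=\left(\bigvee_{i=1,\ldots,k}S^1_i,\ast\right)$ with $k=\beta_1(K)+u_c-1$, and, applied to the reverse flow — whose truncated unstable manifold is the truncated stable manifold of $\varphi$, so that the number of contractible components of an initial section of it is precisely $s_c$ — it gives $h^*(K)=\left(\bigvee_{i=1,\ldots,k^*}S^1_i,\ast\right)$ with $k^*=\beta_1(K)+s_c-1$. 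By the first part $u_c=s_c$, hence $k=k^*$ and $h(K)=h^*(K)$.

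I expect the technical heart to be the second paragraph: correctly identifying the finite set $F$ with the manifold boundary $\partial N^o=\partial N^i$, ruling out degenerate components, and deducing that the arcs of $N^o$ and of $N^i$ strictly alternate along each mixed boundary circle. Once that local picture is secured, the counting and the invocation of Theorem~\ref{main} are routine.
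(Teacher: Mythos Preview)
Your proof is correct and follows essentially the same route as the paper's: take a regular isolating block $N$ (which is simultaneously regular for the reverse flow), identify $u_c$ and $s_c$ with the number of interval components of $N^o$ and $N^i$ respectively, and then count on each boundary circle. The paper compresses your entire second paragraph into the single phrase ``it is clear that the number of components of $N^o$ and $N^i$ contained in a component $C$ of $\partial N$ not contained neither in $n^-$ nor in $n^+$ must be the same''; your alternation argument is exactly the justification of that clause, and your Euler-characteristic alternative is an equally valid shortcut. One minor remark: the digression ruling out degenerate (one-point) components of $N^o$ is unnecessary, since for an isolating block \emph{manifold} the sets $N^o$ and $N^i$ are $1$-manifolds with boundary by definition and hence have no isolated points; your argument there is a bit awkward but the conclusion is not in doubt.
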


\begin{proof}
Consider a regular isolating block $N$ of $K$. As we have seen in the proof of Theorem~\ref{main}, $N^o$ posseses exactly $u_c$ interval components and, working with the reverse flow, it also follows that $N^i$ has exactly $s_c$ interval components. Since $\partial N$ is a disjoint union of circumferences, it is clear that the number of components of $N^o$ and $N^i$ contained in a component $C$ of $\partial N$ not contained neither in $n^-$ nor in $n^+$ must be the same and, hence, $u_c=s_c$. The remaining part of the statement follows straightforward from Theorem~\ref{main}. 
\end{proof}

In the next result we will see that the vanishing of $u_c$ is related to the dynamical property of being \emph{non-saddle} introduced by Bhatia in \cite{Bhatia}.
\begin{definition}
A compact invariant set $K$ is said to be \emph{saddle} if it admits a neighborhood $U$ such that every neighborhood $V$ of $K$ contains a point $x\in V$ with $x[0,+\infty)\nsubseteq U$ and $x(-\infty,0]\nsubseteq U$. Otherwise we say that $K$ is \emph{non-saddle}. 
\end{definition}

For instance, attractors, repellers and unstable attractors with no external explosions (see \cite{Athanassopoulos_Explosions_2003,Morón_Topology_2007,GabitesTrans}) are non-saddle sets. For more information about these sets see \cite{giraldosome2001, Giraldo_Topological_2009}. 

\begin{proposition}\label{non-saddle}
An isolated invariant continuum $K$ of a flow on a surface is non-saddle if and only if $u_c=0$.
\end{proposition}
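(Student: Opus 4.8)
The plan is to characterize the saddle/non-saddle dichotomy through the behavior of trajectories near a regular isolating block $N$ of $K$, which exists by Theorem~\ref{blockbasis}. The key geometric observation is that a contractible (i.e.\ interval) component $J$ of the exit set $N^o$ is one that lies in a boundary circle $C$ of $N$ that also meets the entrance set $N^i$; a point $x_0$ in the interior of $J$ which is close to the endpoints of $J$ has a trajectory that enters $N$ in backward time (through the $N^i$ part of $C$) and leaves $N$ in forward time, i.e.\ it realizes the ``saddle'' behavior: it is arbitrarily close to $K$ (pick $x_0$ near $n^-\cap C$) yet $x_0[0,+\infty)\not\subset N$ and $x_0(-\infty,0]\not\subset N$. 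Conversely, if $u_c=0$ then every boundary circle of $N$ is entirely contained in $n^-$ or entirely in $n^+$, so $\partial N$ splits as a disjoint union of entrance circles and exit circles, and then a standard argument shows no point near $K$ can leave in both time directions.

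First I would fix a regular isolating block $N$ of $K$ and recall from the proof of Theorem~\ref{main} that $N^o$ has exactly $u_c$ interval components and $u-u_c$ circle components, and dually $N^i$ has $s_c=u_c$ interval components (by the preceding Proposition). For the direction ``$u_c=0\Rightarrow$ non-saddle'': if $u_c=0$ then $n^-$ is a union of whole boundary circles of $N$ and $N^i$ is exactly the union of those circles (similarly $N^o$ is the union of the remaining circles, which are exactly $n^+$). Thus $\partial N=N^i\sqcup N^o$ with $N^i\cap N^o=\emptyset$, which means $N$ is at the same time a positively invariant neighborhood of its maximal invariant set on the ``$N^i$ side'' decomposition\,--- more precisely, any $x\in N$ either stays in $N$ for all forward time or exits through $N^o$, and either stays for all backward time or exits through $N^i$, and these exit sets are disjoint. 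One then takes $U=N$ as the witnessing neighborhood in the definition of saddle: given any neighborhood $V\subset N$ of $K$, I must rule out a point $x\in V$ with $x[0,+\infty)\not\subset N$ and $x(-\infty,0]\not\subset N$. If such $x$ existed it would cross $N^i$ in backward time and $N^o$ in forward time; but a trajectory that enters $N$ through $N^i$ and later exits through $N^o$ must, by connectedness of its time interval inside $N$ and the block structure, be a segment $x[t^i(x),t^s(x)]$ with endpoints on different boundary circles\,--- this is exactly the picture that forces $n^-$ and $n^+$ to overlap on a common boundary circle, contradicting $u_c=0$. (One has to be slightly careful here: the right statement is that such a crossing trajectory would have to pass, staying in $N$, from an $N^i$-circle to an $N^o$-circle; I'd use the fact, already exploited in Lemma~\ref{general}(c), that trajectory segments sweep out product neighborhoods, hence the endpoint of the incoming segment and any later exit point lie in boundary components joined through $\mathring N$, and when all boundary circles are purely entrance or purely exit this is impossible because a boundary circle that receives such a trajectory internally lies in neither $n^-$ nor $n^+$.)

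For the converse ``$u_c\ne 0\Rightarrow$ saddle'': pick an interval component $J$ of $N^o$. By the structure of $N^o$ inside $\partial N$ (a disjoint union of circles, $J$ a proper arc of some circle $C$), the complement $C\setminus J$ meets $N^i$; in fact the two endpoints of $J$ are exactly the points of $C$ where the flow is internally tangent to $\partial N$, and on the two sides of $J$ along $C$ the trajectories enter $\mathring N$ in backward time. So for a point $x_0\in \mathring J$ sufficiently close to an endpoint, $x_0$ exits $N$ in forward time (since $x_0\in N^o$) and, tracing backward, $x_0(-\infty,0]$ leaves $N$ as well because the backward trajectory from $x_0$ reaches the entrance side near that endpoint and then leaves through $N^i$. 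Moreover, since the interval component $J$ of $N^o$ corresponds (as in the proof of Theorem~\ref{main}, via collapsing) to a non-trivial piece, there are points of $n^-$ arbitrarily close to $J$ along $C$; hence by shrinking we find such $x_0$ in any prescribed neighborhood $V$ of $K$. This exhibits, for every neighborhood $U$ with $N\subset U$ replaced by the worst case $U$ = any fixed neighborhood (one checks the definition is vacuous unless one takes $U$ large, so it suffices to produce the bad point for $U=N$ and note shrinking $N$ only helps), a point witnessing the saddle condition; since this works for every isolating block and blocks form a basis, $K$ is saddle. The main obstacle I anticipate is making the backward-time argument rigorous: I must show that the backward trajectory of a point $x_0\in\mathring J$ near an endpoint genuinely leaves $N$, i.e.\ that $t^i(x_0)$ is finite and $x_0$ exits through $N^i$ rather than spiraling toward $K$. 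This is where I would lean on the regular block structure and Lemma~\ref{general}: near the endpoint of $J$, the incoming segments $x[t^i(x),0]$ form a product region $U\times[0,1]$ abutting an arc of $C$ that lies in $N^i$, so $t^i(x_0)>-\infty$ and $x_0 t^i(x_0)\in N^i$, as needed. Once this local product picture is in hand, both implications follow and the proposition is proved.
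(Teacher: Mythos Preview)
There is a systematic confusion in your proposal between $n^-$ and $N^i$ that derails both directions. Recall that $n^- = N^-\cap\partial N$ consists of boundary points whose \emph{backward} orbit stays in $N$; such a point cannot lie in $N^i$ (by condition~2 in the definition of isolating block), so in fact $n^-\subset N^o\setminus N^i$. Your sentence ``$n^-$ is a union of whole boundary circles of $N$ and $N^i$ is exactly the union of those circles'' has the sides reversed. The correct statement, which is what the paper exploits, is that when $u_c=0$ the circle components of $n^-$ coincide with the circle components of $N^o$, i.e.\ $N^o=n^-$.

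For the implication $u_c=0\Rightarrow$ non-saddle, once you know $N^o=n^-$ the argument is one line: if $x\in N$ leaves $N$ in forward time then $xt^s(x)\in N^o=n^-\subset N^-$, hence $x\in N^-$ and the backward orbit of $x$ stays in $N$. Thus $N=N^+\cup N^-$, and since regular blocks form a basis this gives non-saddle. Your alternative argument (``a trajectory entering through an $N^i$-circle and exiting through an $N^o$-circle forces $n^-$ and $n^+$ to overlap'') does not work: such crossing trajectories are completely generic in any isolating block and produce no contradiction.

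For the implication $u_c\neq 0\Rightarrow$ saddle, your choice of witness point is wrong. You take $x_0\in\mathring J$ close to an \emph{endpoint} of the arc $J$, and then try to place $x_0$ in an arbitrary neighborhood $V$ of $K$; but $x_0\in\partial N$ is bounded away from $K$, so this is impossible, and the short orbit segments you describe near a tangency point never approach $K$ either. The paper instead uses that (by regularity) $J$ contains in its interior a component of $n^-$, picks a sequence $x_n\in J\setminus n^-$ converging to some $x\in n^-\cap J$, and observes that each $x_n$ leaves $N$ in both time directions while the backward orbit of $x_n$, being close to that of $x$ (which tends to $K$), passes through any prescribed $V$ before exiting. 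The point in $V$ on that backward orbit is the saddle witness, not $x_n$ itself.
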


\begin{proof}
Suppose, arguing by contradiction, that $K$ is non-saddle and $u_c\neq 0$. Let $N$ be a regular isolating block of $K$. Since $u_c\neq 0$, $N^o$ must have at least one component $J$ which is an interval. Besides, $J$ must contain in its interior a component of $n^-$. Let $(x_n)$ be a sequence in $J-n^-$ convergent to $x\in n^-$. The trajectory of each $x_n$ must leave $N$ in the past and in the future but, since $x_n\to x\in n^-$, fixed any neighborhood $U$ of $K$, there exists $n$ such that the trajectory of $x_n$ meets $U$ before it leaves $N$ in the past. This is in contradiction with $K$ being non-saddle.     

Conversely, assume that $u_c=0$. Then, given a regular isolating block $N$ of $K$, $N^o$ must agree with $n^-$. We will see that given any $x\in N$, either $\gamma^+(x)$ or $\gamma^-(x)$ is contained in $N$. Suppose, arguing by contradiction, that there exists a point $x$ whose trajectory leaves $N$ in the past and in the future. Thus, the exit time function $t^o$ is defined in $x$ and, $xt^o(x)\in N^o=n^-$. As a consequence, $x\in N^-$ which is in contradiction with the trajectory of $x$ leaving $N$ in the past. Therefore, $K$ is non-saddle since $K$ admits a basis of neighborhoods comprised of regular isolated blocks.
\end{proof}

\begin{remark}
In \cite{giraldosome2001} it was proved that isolated non-saddle sets (possibly not connected) in manifolds have the shape of finite polyhedra. Then, they have a finite number of components, each one of them isolated and non-saddle. As a consequence, Proposition~\ref{non-saddle} also holds if $K$ has a finite number of components. 
\end{remark}

Now we are going to study some questions about continuation of isolated invariant continua. The theory of continuation plays a central role in the Conley index theory. We recommend \cite{Salamon, Conley_mono, Giraldo_Singular_2009} for information about the basic facts of this notion. In figure~\ref{fig:8} we show an example from \cite{Giraldo_Topological_2009}, in which it is shown that some dynamical and topological properties of the original isolated invariant continuum are not preserved by its continuations. For instance, connectivity, shape or non-saddleness are some properties not preserved by continuation. We will see that in spite of this fact, for flows on surfaces we can have a good understanding of how continuations work.   

\begin{figure}[h]
\center
\includegraphics[scale=1]{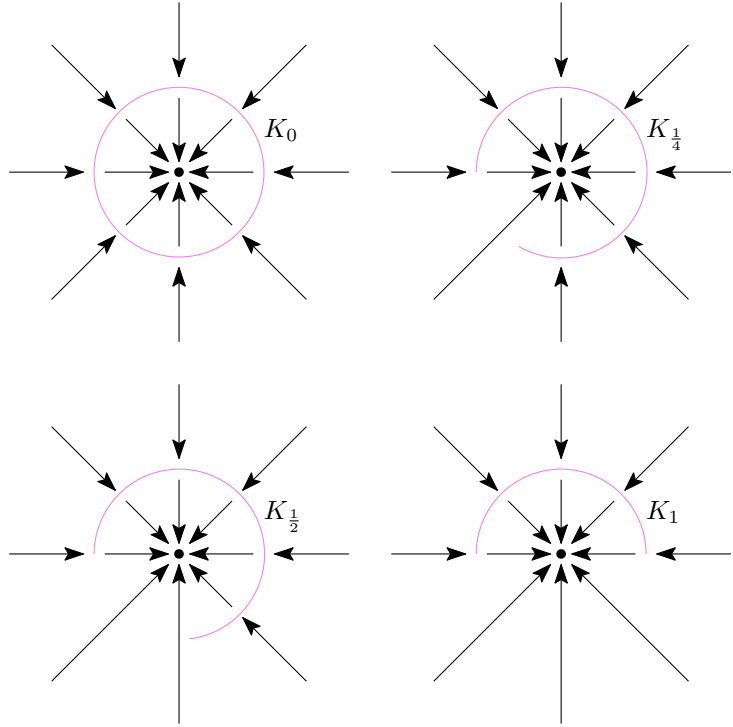}
\caption{Continuation of an isolated non-saddle circumference by a family of saddle sets with the shape of a point.}
\label{fig:8}
\end{figure}

The next result shows that even if the continuation of an isolated invariant continuum is not connected, for small values of the parameter each one of its components is shape dominated by the  the original continuum. This means that small perturbations of a flow cannot increase, in a certain sense, the topological complexity of isolated invariant continua. 

\begin{theorem}\label{contshape}
Let $(\varphi_\lambda)_{\lambda\in I}$ be a parametrized family of flows defined on a surface $M$ and $K_0$ an isolated invariant continuum for $\varphi_0$. Suppose that the family $(K_\lambda)_{\lambda\in I}$ is a continuation of $K_0$. Then, there exists $\lambda_0\leq 1$ such that 
\[
\beta_1(K_\lambda)\leq\beta_1(K_0),\quad\mbox{if} \quad \lambda\leq\lambda_0.
\]
In particular, if $\lambda\leq\lambda_0$ and $K_\lambda^\alpha$ is a component of $K_\lambda$ then, $\Sh(K_0)\geq \Sh(K^\alpha_\lambda)$.  
\end{theorem}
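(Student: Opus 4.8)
The plan is to squeeze $K_0$ inside a particularly good neighbourhood and then transport the cohomological argument of Theorem~\ref{polyhedron} to the nearby flows. First I would use Theorem~\ref{blockbasis} to pick a \emph{regular} isolating block $N$ of $K_0$ for $\varphi_0$. Being regular, $N$ is a connected compact $2$-manifold with boundary whose inclusion $K_0\hookrightarrow N$ is a shape equivalence, so $\check H^1(N)=H^1(N)$ is finitely generated with $\beta_1(N)=\beta_1(K_0)$. Since an isolating block is in particular an isolating neighbourhood, $K_0\subset\mathring N$; hence the robustness of isolating neighbourhoods (see \cite{Salamon, Conley_mono}) together with the defining property of the continuation $(K_\lambda)_{\lambda\in I}$ yields some $\lambda_0\leq 1$ such that, for every $\lambda\leq\lambda_0$, $N$ is an isolating neighbourhood of $K_\lambda$ for $\varphi_\lambda$; in particular $K_\lambda\subset\mathring N$ and $K_\lambda=\mathrm{Inv}(N,\varphi_\lambda)$.

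Next I would run the argument of Theorem~\ref{polyhedron} verbatim with the pair $(N,K_\lambda)$ in place of $(N,K)$, noting that it only uses that $N$ is a connected compact surface with boundary which is an isolating neighbourhood of the invariant set, and never that $N$ is a block for the flow in question. Concretely, Alexander duality in the $2$-manifold $N$ gives $\check H^2(N,K_\lambda)\cong H_0(N-K_\lambda,\partial N)$, and the latter vanishes: every point of $N-K_\lambda$ leaves $N$ in forward or backward time along a trajectory segment which, by invariance of $K_\lambda$, stays in $N-K_\lambda$ and ends on $\partial N$, so every component of $N-K_\lambda$ meets $\partial N$. The long exact sequence of $(N,K_\lambda)$ then forces $i^*:\check H^1(N)\to\check H^1(K_\lambda)$ to be surjective, whence $\beta_1(K_\lambda)\leq\beta_1(N)=\beta_1(K_0)$ for all $\lambda\leq\lambda_0$.

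For the last assertion I would first observe that, for $\lambda\leq\lambda_0$, $K_\lambda$ has only finitely many components, so that any component $K^\alpha_\lambda$ is a proper subcontinuum of $M$ whose Čech cohomology $\check H^1(K^\alpha_\lambda)$ is a direct summand of $\check H^1(K_\lambda)$; thus $\check H^1(K^\alpha_\lambda)$ is finitely generated with $\beta_1(K^\alpha_\lambda)\leq\beta_1(K_\lambda)\leq\beta_1(K_0)$, and $\check H^2(K^\alpha_\lambda)=0$ since $K^\alpha_\lambda$ is a proper subcontinuum of a connected surface. By Corollary~\ref{jaime2}, both $K_0$ and $K^\alpha_\lambda$ then have the shape of wedges of circumferences, the former with at least as many summands as the latter. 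Finally, a wedge of $n$ circumferences is shape dominated by a wedge of $m\geq n$ circumferences (include the first $n$ circles, collapse the remaining $m-n$), so $\Sh(K_0)\geq\Sh(K^\alpha_\lambda)$.

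I expect the main obstacle to be the second half of the first paragraph: matching the abstract definition of the continuation $(K_\lambda)_{\lambda\in I}$ with the concrete neighbourhood $N$ built from $K_0$, i.e. checking that $\mathrm{Inv}(N,\varphi_\lambda)$ really is the given member $K_\lambda$ for small $\lambda$; this must be argued through the uniqueness and transitivity of continuation rather than a bare perturbation estimate. A secondary point, needed only for the component-wise conclusion, is the finiteness of the number of components of $K_\lambda$ for $\lambda\leq\lambda_0$, which I would obtain by applying the smoothing device of Lemma~\ref{smooth} to $\varphi_\lambda$ inside $\mathring N$ and using that an isolating block manifold is a compact surface; once these two points are in place everything else is the routine cohomological bookkeeping above.
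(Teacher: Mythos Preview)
Your core argument---choose a regular block $N$ for $K_0$, invoke \cite{Salamon} to make $N$ an isolating neighbourhood of $K_\lambda$ for small $\lambda$, and rerun the Alexander duality/long exact sequence computation of Theorem~\ref{polyhedron} with $K_\lambda$ in place of $K_0$---is exactly the paper's proof, and you have in fact written out more detail than the paper does (it just says ``reasoning as in the proof of Theorem~\ref{polyhedron}'').

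The one genuine gap is your claimed finiteness of the set of components of $K_\lambda$. Lemma~\ref{smooth} and the compactness of an isolating block manifold only tell you that the \emph{block} has finitely many components, not that $K_\lambda$ does: a single connected block can isolate an invariant compactum with uncountably many components (for instance a Cantor set of fixed points on a horizontal segment in the plane, with a vertical flow slowed to zero exactly at those points). So your proposed route to the ``in particular'' clause does not go through as stated. The conclusion is nevertheless easy to rescue without finiteness: for a component $K^\alpha_\lambda$ of the compactum $K_\lambda$, components coincide with quasi-components, so $K^\alpha_\lambda=\bigcap V$ over clopen $V\subset K_\lambda$ containing it; each such $V$ satisfies $\check H^1(K_\lambda)\cong\check H^1(V)\oplus\check H^1(K_\lambda\setminus V)$, hence $\dim\check H^1(V)\le\beta_1(K_\lambda)$, and continuity of \v Cech cohomology gives $\check H^1(K^\alpha_\lambda)=\varinjlim_V\check H^1(V)$ with the same bound. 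From there your use of Corollary~\ref{jaime2} and the explicit domination between wedges of circles is fine. (The paper itself does not spell out the shape-domination step at all, so on this point you are already going further than the original.)
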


\begin{proof}
Let $N$ be a regular isolating block of $K_0$. Then, by \cite{Salamon} there exists $\lambda_0\leq 1$ such that $N$ is an isolating neighborhood of $K_\lambda$ for $\lambda\leq\lambda_0$. Hence, reasoning as in the proof of Theorem~\ref{polyhedron} we get that 
\[
i^*_\lambda:\check{H}^1(N)\to\check{H}^1(K_\lambda)
\]
is surjective. Therefore, $\beta_1(K_\lambda)\leq\beta_1(N)$ and the result follows since $N$ is a regular block of $K_0$.
\end{proof}

We will study continuations of isolated invariant continua regarding their dynamical nature. For this purpose we will make use of the next proposition.

\begin{proposition}\label{equ}
Let $(\varphi_\lambda)_{\lambda\in I}$ be a parametrized family of flows defined on a surface $M$ and $K_0$ an isolated invariant continuum for $\varphi_0$ which is neither an attractor nor a repeller. Suppose that the family $(K_\lambda)_{\lambda\in I}$ is a continuation of $K_0$ such that $K_\lambda$ consists of a finite number of connected components. Then
\[
\left(\beta_1(K_0)-\beta_1(K_\lambda)\right)+\left(u_c-u^\lambda_c\right)=1-n_\lambda,
\]
where $u^\lambda_c$ is the number of contractible components of an initial section of the truncated unstable manifold $W^u(K_\lambda)-K_\lambda$ and $n_\lambda$ is the number of components of $K_\lambda$.
\end{proposition}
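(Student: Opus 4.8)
The plan is to combine the invariance of the Conley index under continuation with a componentwise application of Theorem~\ref{main}(i) to $K_\lambda$, reducing the identity to the additivity of $\beta_1$, of $\operatorname{rk}CH^1$, and of the contractible-component count over the finitely many components of $K_\lambda$.

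First I would record what Theorem~\ref{main}(i) gives for $K_0$: since $K_0$ is neither an attractor nor a repeller, $h(K_0)$ is the pointed homotopy type of a wedge of $\beta_1(K_0)+u_c-1$ circumferences, so $\operatorname{rk}CH^1(K_0)=\beta_1(K_0)+u_c-1$ while $\widetilde{CH}^0(K_0)=\widetilde{CH}^2(K_0)=\{0\}$. Because $(K_\lambda)_{\lambda\in I}$ is a continuation of $K_0$, the Conley index is preserved, so $CH^*(K_\lambda)\cong CH^*(K_0)$; in particular $\operatorname{rk}CH^1(K_\lambda)=\beta_1(K_0)+u_c-1$ and $\widetilde{CH}^0(K_\lambda)=\widetilde{CH}^2(K_\lambda)=\{0\}$.

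Next I would decompose $K_\lambda=K_\lambda^1\sqcup\cdots\sqcup K_\lambda^{n_\lambda}$ into its connected components. Having only finitely many, each $K_\lambda^j$ is a clopen invariant subset of $K_\lambda$, hence an isolated invariant continuum, and by Lemma~\ref{smooth} one may choose an isolating block manifold $N=N^1\sqcup\cdots\sqcup N^{n_\lambda}$ of $K_\lambda$ which is the disjoint union of isolating blocks of the $K_\lambda^j$, so that $N^o=\bigsqcup_jN^{j,o}$ and $n^-=\bigsqcup_jn^{j,-}$. Then $h(K_\lambda)=\bigvee_jh(K_\lambda^j)$ and $\widetilde{CH}^*(K_\lambda)\cong\bigoplus_j\widetilde{CH}^*(K_\lambda^j)$. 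From $\widetilde{CH}^2(K_\lambda)=\{0\}$ it follows that $\widetilde{CH}^2(K_\lambda^j)=\{0\}$, so no $K_\lambda^j$ is a repeller (Theorem~\ref{main}(iii)); from $\widetilde{CH}^0(K_\lambda)=\{0\}$ each $h(K_\lambda^j)$ is connected, so no $K_\lambda^j$ is an attractor (Theorem~\ref{main}(ii)). Hence Theorem~\ref{main}(i) applies to every component, giving
\[
\operatorname{rk}CH^1(K_\lambda^j)=\beta_1(K_\lambda^j)+u_c^{\lambda,j}-1,
\]
where $u_c^{\lambda,j}$ is the number of contractible components of an initial section of $W^u(K_\lambda^j)-K_\lambda^j$.

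Finally I would assemble the additivity statements: $\beta_1(K_\lambda)=\sum_j\beta_1(K_\lambda^j)$ since $\check H^1$ of a finite disjoint union splits; $\operatorname{rk}CH^1(K_\lambda)=\sum_j\operatorname{rk}CH^1(K_\lambda^j)$ from the wedge decomposition above; and $u_c^\lambda=\sum_ju_c^{\lambda,j}$ because $n^{j,-}$ is an initial section of $W^u(K_\lambda^j)-K_\lambda^j$, $n^-=\bigsqcup_jn^{j,-}$ is an initial section of $W^u(K_\lambda)-K_\lambda$, and all initial sections of a given truncated unstable manifold are homeomorphic, so the contractible components of $n^-$ are precisely the union over $j$ of those of the $n^{j,-}$. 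Summing the componentwise identities and using $\operatorname{rk}CH^1(K_\lambda)=\beta_1(K_0)+u_c-1$ gives
\[
\beta_1(K_0)+u_c-1=\beta_1(K_\lambda)+u_c^\lambda-n_\lambda,
\]
which rearranges to the stated formula. I expect the main obstacle to be the careful treatment of the truncated unstable manifold under the decomposition — making sure that an initial section really splits along the components of $K_\lambda$, which is exactly what forces the use of the disjoint-union isolating block and the identification $n^-=\bigsqcup_jn^{j,-}$ — together with the verification, via the vanishing of $\widetilde{CH}^0$ and $\widetilde{CH}^2$ inherited from $K_0$, that each $K_\lambda^j$ is again neither an attractor nor a repeller so that Theorem~\ref{main}(i) may legitimately be invoked for it.
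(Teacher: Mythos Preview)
Your proof is correct and follows the same route as the paper: invariance of the Conley index under continuation, the wedge decomposition $h(K_\lambda)=\bigvee_j h(K_\lambda^j)$, and a componentwise appeal to Theorem~\ref{main}. The paper's proof is considerably more terse---it simply asserts that ``the result follows from \eqref{wedge} and Theorem~\ref{main}''---whereas you have spelled out the verification that each $K_\lambda^j$ is neither an attractor nor a repeller and the additivity of $\beta_1$, $\operatorname{rk}CH^1$, and $u_c^\lambda$, but these are exactly the details implicit in the paper's one-line conclusion.
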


\begin{proof}
Since $K_\lambda$ has a finite number of components $K_\lambda^ 1,\ldots,K_\lambda^ {n_\lambda}$, all of them are isolated and
\begin{equation}
h(K_\lambda)=\bigvee_{i=1}^{n_{\lambda}} h(K_\lambda^i)  \label{wedge}
\end{equation}
Moreover, $h(K_0)=h(K_\lambda)$ and, hence, $h(K_\lambda)$ is the pointed homotopy type of a wedge of $\beta_1(K_0)+u_c-1$ circumferences. The result follows from \eqref{wedge} and Theorem~\ref{main}.
\end{proof}

\begin{theorem}\label{cont2}
Let $(\varphi_\lambda)_{\lambda\in I}$ be a parametrized family of flows defined on a surface $M$ and $K_0$ an isolated invariant continuum for $\varphi_0$. Suppose that the family $(K_\lambda)_{\lambda\in I}$ is a continuation of $K_0$. Then,
\begin{enumerate}
\item[i)] If $K_0$ is an attractor (repeller), $K_\lambda$ has a component $K^1_\lambda$ which is also an attractor (repeller) and $\Sh(K^1_\lambda)=\Sh(K_0)$.

\item[ii)] If $K_0$ is neither an attractor nor a repeller then $K_\lambda$ is neither an attractor nor a repeller and:
\begin{enumerate}
\item If $K_0$ is saddle there exists $\lambda_0\leq 1$ such that  $K_\lambda$ is also saddle for $\lambda\leq\lambda_0$. 
\item If $K_0$ is non-saddle and $K_\lambda$ is a continuum for each $\lambda$, then  $K_\lambda$ is non-saddle  if and only if $\Sh(K_0)=\Sh(K_\lambda)$.
\end{enumerate}
\end{enumerate}
\end{theorem}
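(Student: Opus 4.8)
The plan is to treat the three parts in order of increasing subtlety, leaning on the three main classification theorems proved so far (Theorem~\ref{main}, Proposition~\ref{non-saddle}, Proposition~\ref{equ}) together with standard continuation invariance of the Conley index.

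For part i), suppose $K_0$ is an attractor. By Theorem~\ref{main}~ii) its Conley index is $\left(\bigvee_{i=1,\ldots,\beta_1(K_0)}S^1_i\cup\{\bullet\},\bullet\right)$, and this index is preserved along the continuation, so $h(K_\lambda)$ has exactly this pointed homotopy type for every $\lambda$. In particular $h(K_\lambda)$ is disconnected (it has the isolated base point $\bullet$). If $K_\lambda$ had a finite number of components, then by the wedge formula \eqref{wedge} of Proposition~\ref{equ} the index would split as a wedge, and a wedge of spaces is disconnected only if one of the summands already carries the extra point; by Theorem~\ref{main} the only way a single isolated invariant continuum has disconnected index is to be an attractor. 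So some component $K^1_\lambda$ is an attractor, and since the remaining summands must then be wedges of circumferences, $h(K^1_\lambda)=h(K_0)$, which by Theorem~\ref{main}~ii) forces $\beta_1(K^1_\lambda)=\beta_1(K_0)$; since both are proper subcontinua of a surface, Corollary~\ref{jaime2} gives $\Sh(K^1_\lambda)=\Sh(K_0)$. The subtlety is that a priori $K_\lambda$ need not have finitely many components --- but $h(K_\lambda)$ is a finite wedge plus a point, hence has finitely many ``pieces'', and since each quasicomponent of the isolating block contributes a summand, $K_\lambda$ does have finitely many components; alternatively one invokes that an attractor for the restricted flow in a block persists, which is where I expect to need a short argument. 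The repeller case is dual, using Theorem~\ref{main}~iii) and that the index then contains a closed surface summand, which again can only come from a repeller component.

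For part ii), first note that ``neither attractor nor repeller'' is itself preserved: by Theorem~\ref{main} these two classes are exactly the ones whose index is disconnected or contains a closed-surface wedge summand, and the index is a continuation invariant, so if $K_0$ is in the remaining class so is $K_\lambda$. For ii)(a), suppose $K_0$ is saddle, i.e. by Proposition~\ref{non-saddle} $u_c\geq 1$. Take a regular isolating block $N$ of $K_0$; by \cite{Salamon} there is $\lambda_0\le 1$ with $N$ an isolating neighborhood of $K_\lambda$ for $\lambda\le\lambda_0$, and after shrinking we may assume $N$ is a regular block (or at least a block whose exit set still has an interval component, since the exit/entrance structure varies continuously). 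An interval component of the exit set means there is a sequence $x_n\to x\in n^-_\lambda$ with each $x_n$ leaving the block in both time directions while spending time arbitrarily close to $K_\lambda$, exactly as in the proof of Proposition~\ref{non-saddle}; this exhibits the saddle property for $K_\lambda$. The point to be careful about is that being saddle for the ambient flow and having a block exit interval need not coincide when $N$ is only a block for the \emph{original} flow --- so I would argue directly with a block for $\varphi_\lambda$ inside $N$, using that $u_c(K_\lambda)$ cannot drop to $0$ without the Conley index changing, which it does not.

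For ii)(b), assume $K_0$ is non-saddle, so $u_c=0$ and $h(K_0)=\bigvee_{i=1,\ldots,\beta_1(K_0)-1}S^1_i$ by Theorem~\ref{main}~i). If $K_\lambda$ is a continuum for all $\lambda$, Proposition~\ref{equ} with $n_\lambda=1$ gives $\left(\beta_1(K_0)-\beta_1(K_\lambda)\right)+\left(u_c-u_c^\lambda\right)=0$, i.e. $\beta_1(K_0)-\beta_1(K_\lambda)=u_c^\lambda\ge 0$. Thus $K_\lambda$ is non-saddle $\iff u_c^\lambda=0 \iff \beta_1(K_\lambda)=\beta_1(K_0)$, and by Corollary~\ref{jaime2} the latter is exactly $\Sh(K_\lambda)=\Sh(K_0)$. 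The main obstacle across the whole theorem is the bookkeeping in part i): justifying that the continuation of an attractor, even when it splinters, must retain an honest attractor \emph{component} rather than merely having the right index --- this is where I expect to spend the most care, pinning down that the disconnectedness of the index is witnessed by an actual isolated attracting component via the wedge decomposition and the classification of which single continua can produce a disconnected or surface-type summand.
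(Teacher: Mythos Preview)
Your treatments of parts i) and ii)(b) are essentially the paper's approach and are sound; the worry about finitely many components in i) is unnecessary, since one can argue directly (as the paper does) that if $K_\lambda$ is disconnected then any isolating block of $K_\lambda$ is disconnected, and since $N/N^o$ has two components some block component must miss $N^o$ entirely, yielding the attractor component $K^1_\lambda$ without ever counting components of $K_\lambda$.

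The genuine gap is in ii)(a). Your key claim, that ``$u_c(K_\lambda)$ cannot drop to $0$ without the Conley index changing,'' is false: by Theorem~\ref{main}~i) the index in this regime depends only on the sum $\beta_1(K)+u_c$, so $u^\lambda_c$ can drop to $0$ while the index stays fixed, provided $\beta_1(K_\lambda)$ rises to compensate. (Indeed, Proposition~\ref{equ} with $n_\lambda=1$ gives exactly $\beta_1(K_\lambda)-\beta_1(K_0)=u_c-u^\lambda_c$, so $u^\lambda_c=0$ is perfectly compatible with $h(K_\lambda)=h(K_0)$.) The paper closes this gap with Theorem~\ref{contshape}, which shows that for small $\lambda$ one has $\beta_1(K_\lambda)\le\beta_1(K_0)$ because $K_\lambda$ sits inside a \emph{regular} block of $K_0$; feeding this into Proposition~\ref{equ} then forces $u^\lambda_c\ge u_c>0$. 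Your ``exit/entrance structure varies continuously'' heuristic does not supply this, since $N$ is only an isolating \emph{neighborhood} (not a block) for $\varphi_\lambda$, and a regular block for $\varphi_\lambda$ inside $N$ has no a priori reason to inherit an interval exit component. You also omit the case where $K_\lambda$ has infinitely many components: the paper dispatches this by observing that non-saddle sets have polyhedral shape \cite{giraldosome2001}, hence finitely many components, so such a $K_\lambda$ is automatically saddle.
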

\begin{proof}
Suppose that $K_0$ is an attractor. The case of $K_0$ being a repeller is completely analogous reasoning with the reverse flow. Since $h(K_0)=h(K_\lambda)$, this means that $h(K_\lambda)$ is the pointed homotopy type of $\left(\bigvee_{i=1,\ldots,\beta_1(K_0)} S^1_i\cup\{\bullet\},\bullet\right)$. A consequence of this fact is that either $K_\lambda$ is an attractor or it is the disjoint union of an attractor $K^1_\lambda$ and an isolated invariant set $K^2_\lambda$ with trivial Conley index. Indeed, if $K_\lambda$ is connected then by Theorem~\ref{main} it must be an attractor and its first Betti number must agree with $\beta_1(K_0)$. Hence, $\Sh(K_\lambda)=\Sh(K_0)$. Suppose, on the other hand, that $K_\lambda$ is not connected. Then, given an isolating block $N$ of $K_\lambda$ it cannot be connected and it must contain a connected component with empty exit set. Let $K^1_\lambda$ be the isolated invariant set contained in that component. It follows that $K^1_\lambda$ is an attractor and, by the additive property of the Conley index, the index of $K^2_\lambda=K_\lambda-K^1_\lambda$ must be trivial. Besides, $K^1_\lambda$ must be connected, since if not it would be the disjoint union of $i>1$ attractors and its Conley index would be the homotopy type of a space with $i+1$ components. This is not possible since $h(K_\lambda^1)$ must agree with $h(K_0)$ which is the pointed homotopy type of a 2-component space. Therefore,  Theorem~\ref{main} ensures that $\beta_1(K^1_\lambda)$ agrees with $\beta_1(K_0)$ and, as a consequence, $\Sh(K^1_\lambda)=\Sh(K_0)$. 

Suppose that $K_0$ is neither an attractor, nor a repeller.  Then, since $h(K_\lambda)$ agrees with $h(K_0)$, the former must be connected and it cannot contain any closed surface as a summand. Hence, it is easy to conclude that $K_\lambda$ is neither an attractor nor a repeller. Suppose that $K_ 0$ is saddle. If $K_\lambda$ has an infinite number of components it must be saddle since non-saddle sets have the shape of finite polyhedra \cite{giraldosome2001}.  Let us assume that $K_\lambda$ has a finite number of components.  From Theorem~\ref{contshape} we get that there exists $\lambda_0\leq 1$ such that $\beta_1(K_\lambda)\leq\beta_1(K_0)$ if $\lambda\leq\lambda_0$. Then, using this and Proposition~\ref{equ} we obtain that $u^\lambda_c\geq u_c> 0$ for $\lambda\leq \lambda_0$ and, thus, $K_\lambda$ is saddle for $\lambda\leq\lambda_0$. 

Let us assume that $K_0$ is non-saddle, i.e. $u_c=0$, and $K_\lambda$ is connected for every $\lambda$. We will see that $K_\lambda$ is non-saddle if and only if $\Sh(K_\lambda)=\Sh(K_0)$. Indeed, suppose that $K_\lambda$ is also non-saddle, i.e. $u^\lambda_c=0$. Thus, Proposition~\ref{equ} ensures that $\beta_1(K_0)$ must agree with $\beta_1(K_\lambda)$. Therefore, $K_0$ and $K_\lambda$ must have the same shape. Conversely, if $\Sh(K_\lambda)=\Sh(K_0)$ then $\beta_1(K_0)=\beta_1(K_\lambda)$ and, by Proposition~\ref{equ},  $u^\lambda_c$ must be zero. Then, $K_\lambda$ is non-saddle.
\end{proof}

\begin{remark}
We would like to point out the following things regarding Theorem~\ref{cont2}:
\begin{enumerate}
\item If $K_0$ is an attractor (repeller) it was proven in \cite{SanjurjoJMAA} that for small values of $\lambda$, $K_\lambda$ is actually connected and, hence, an attractor with the shape of $K_0$ even for flows not necessarily defined on more general spaces than surfaces such as ANR's. Besides, in the argument presented here we only exploit the fact that $h(K)$ is the pointed homotopy type of a 2-component space and, thus, the result also holds for flows on more general phase spaces. This was proven in \cite{Giraldo_Singular_2009}.

\item  It was seen in \cite{Giraldo_Topological_2009} that if $(\varphi_\lambda)_{\lambda\in I}$ is a differentiable family of flows on a differentiable manifold and $K_0$ is non-saddle, then, if $K_\lambda$ is non-saddle for small values of $\lambda$, $K_0$ and $K_\lambda$ must have the same shape, i.e. the hypotheses about the connectivity of $K_\lambda$ can be dropped for small values of $\lambda$. The converse statement is known to be true for compact oriented differentiable manifolds with $H^1(M)=0$ \cite{hbarge}. Theorem~\ref{cont2} shows that this converse statement is true for any surface and for any parameter value. 
\end{enumerate}
\end{remark}

The next results are concerned with the topological characterization of those isolated invariant continua in surfaces which do not have fixed points. For this purpose we will make use of an index introduced by Srzednicki \cite{Srzednicki} which generalizes the degree of $C^1$ vector fields on $\mathbb{R}^n$. 

The index $i(\varphi,U)$ associated to a flow defined on an Euclidean Neighborhood Retract (manifolds, CW-complexes, polyhedra...) $M$ and an open subset $U\subset M$ with compact closure is an integer number which measures, in some extent, the number of fixed points $\varphi$ has in $U$. In particular, $\varphi$ must have fixed points in $U$ if $i(\varphi,U)\neq 0$. It turns out that if $N$ is an isolating neighborhood 
\[
i(\varphi,\mathring{N})=\chi(h(K)).
\]
A straightforward consequence of this fact is the next 

\begin{proposition}\label{index}
Let $K$ be an isolated invariant continuum of a flow $\varphi$ defined on a surface and $N$ an isolating neighborhood of $K$. Then,
\[
i(\varphi,\mathring{N})=1-\beta_1(K)-u_c.
\]
\end{proposition}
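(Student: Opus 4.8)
The plan is to combine the Srzednicki index formula $i(\varphi,\mathring{N})=\chi(h(K))$, which is quoted just before the statement, with the explicit description of the Conley index provided by Theorem~\ref{main}. The whole proof is essentially a computation of $\chi(h(K))$ in each of the three dynamical cases, so the main work is bookkeeping of Euler characteristics of wedges rather than any new dynamical input; the ``straightforward consequence'' remark in the excerpt already signals this.

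First I would recall that, since $N$ is an isolating neighborhood of $K$, the cited property gives $i(\varphi,\mathring{N})=\chi(h(K))$, and that $\chi$ of a pointed space is the alternating sum of its Betti numbers (with $\mathbb{Z}_2$ coefficients, as fixed in the Introduction), being additive over wedge sums in reduced cohomology. Next I would split into the three cases of Theorem~\ref{main}. When $K$ is neither an attractor nor a repeller, $h(K)\simeq\bigvee_{i=1}^{k}S^1_i$ with $k=\beta_1(K)+u_c-1$, so $\chi(h(K))=1-k=2-\beta_1(K)-u_c$; one must be careful here because the ``honest'' Euler characteristic of a pointed wedge of $k$ circles is $1-k$, giving $i(\varphi,\mathring N)=1-k=2-\beta_1(K)-u_c$ — but the stated formula is $1-\beta_1(K)-u_c$, so I would instead use the identity $\chi(h(K))=\chi(N,N^o)=\chi(N)-\chi(N^o)$ already derived in the proof of Theorem~\ref{main}, namely $\chi(h(K))=1-\beta_1(N)-u_c$, together with $\beta_1(K)=\beta_1(N)$ which holds for a regular block (the inclusion being a shape equivalence forces $\check H^1(N)\cong\check H^1(K)$). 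This yields exactly $i(\varphi,\mathring N)=1-\beta_1(K)-u_c$.

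Then I would check the other two cases are consistent with the same formula. If $K$ is an attractor, $u=0$ hence $u_c=0$, and $h(K)$ is a wedge of $\beta_1(K)$ circles together with a disjoint base point, so its reduced cohomology has $\beta_0=1$ (the extra component) and $\beta_1=\beta_1(K)$, giving $\chi(h(K))=1-\beta_1(K)=1-\beta_1(K)-u_c$, as required. If $K$ is a repeller, $h(K)$ is the pointed wedge of a closed surface $\Sigma_g$ (or $N_g$) with $u-1$ circles; using $\chi$ additivity over wedges in reduced cohomology, $\chi(h(K))=\bigl(\chi(\Sigma_g)-1\bigr)+\sum_{i=1}^{u-1}\bigl(\chi(S^1)-1\bigr)=(\chi(\Sigma_g)-1)-(u-1)$, and substituting $\chi(\Sigma_g)=1-\beta_1(N)+u$ (computed in the proof of Theorem~\ref{main}) gives $\chi(h(K))=1-\beta_1(N)=1-\beta_1(K)-u_c$ since $u_c=0$ for a repeller. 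In all three cases we land on $i(\varphi,\mathring N)=1-\beta_1(K)-u_c$.

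The only genuine subtlety — and the step I expect to need the most care — is the convention for the Euler characteristic of a pointed space appearing in the index formula $i(\varphi,\mathring N)=\chi(h(K))$: one must use the reduced Euler characteristic $\chi(N,N^o)$ rather than the unreduced $\chi$ of the representing space, and one must invoke regularity of the block to pass from $\beta_1(N)$ to $\beta_1(K)$. Once those two points are pinned down, the statement follows immediately by listing the three cases of Theorem~\ref{main} as above, and I would write the proof in exactly that form: invoke $i(\varphi,\mathring N)=\chi(h(K))$, recall $\chi(h(K))=\chi(N,N^o)=\chi(N)-\chi(N^o)$ from the proof of Theorem~\ref{main}, observe $\chi(N^o)=u_c$ (each interval component contributes $1$, each circle component $0$) and $\beta_1(N)=\beta_1(K)$ for a regular block, and conclude.
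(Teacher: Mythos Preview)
Your proposal is correct, and the final paragraph is exactly the argument the paper intends: the paper offers no proof beyond ``a straightforward consequence'' of $i(\varphi,\mathring N)=\chi(h(K))$, and the computation $\chi(h(K))=\chi(N,N^o)=\chi(N)-\chi(N^o)=(1-\beta_1(N))-u_c=1-\beta_1(K)-u_c$ for a regular block $N$ is precisely what is implicit there (indeed this very identity already appears in the proof of Theorem~\ref{main}).

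The case-by-case detour through the three items of Theorem~\ref{main} is unnecessary, as you yourself recognise by the end: the relative Euler characteristic $\chi(N,N^o)$ is defined and computed uniformly, without needing to know whether $K$ is an attractor, a repeller, or neither. Your momentary confusion in case~(i) --- getting $2-\beta_1(K)-u_c$ instead of $1-\beta_1(K)-u_c$ --- was caused by using the unreduced Euler characteristic $1-k$ of the wedge rather than the reduced one $-k$; the convention in Srzednicki's formula is $\chi(h(K))=\chi(N,N^o)=\sum(-1)^i\dim CH^i(K)$, which coincides with the \emph{reduced} Euler characteristic of the representing pointed space. Once that convention is fixed, the three cases all collapse into the single line you give at the end, and that is how I would write the proof.
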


\begin{theorem}\label{fixed}
Suppose that $K$ is an isolated invariant continuum of a flow on a surface $M$ and that $K$ does not contain fixed points. Then, $K$ is non-saddle and it is either a limit cycle, a closed annulus bounded by two limit cycles, or a M\"obius strip bounded by a limit cycle.
\end{theorem}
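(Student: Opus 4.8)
The plan is to combine the classification of the Conley index from Theorem~\ref{main} with the Srzednicki index formula from Proposition~\ref{index} and the structural information coming from regular isolating blocks. First I would observe that, since $K$ has no fixed points, for any isolating neighborhood $N$ we must have $i(\varphi,\mathring N)=0$; by Proposition~\ref{index} this forces $\beta_1(K)+u_c=1$. Hence there are only two possibilities: either $\beta_1(K)=1$ and $u_c=0$, or $\beta_1(K)=0$ and $u_c=1$. I would dispose of the second case first: if $u_c=1$ then $K$ is saddle by Proposition~\ref{non-saddle}, and a fixed-point-free flow on a surface near an isolated invariant continuum of trivial shape should be ruled out using the local $C^1$ model from Lemma~\ref{smooth} together with a Poincar\'e--Bendixson type argument as in Corollary~\ref{disk} (a regular block of trivial shape is a disk, so the continuum would have to contain a fixed point, contradiction). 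Thus $u_c=0$, so $K$ is non-saddle by Proposition~\ref{non-saddle}, and $\beta_1(K)=1$, so $K$ has the shape of a single circle by Theorem~\ref{polyhedron}/Corollary~\ref{jaime2}.

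Next I would pass to a regular isolating block $N$ of $K$ (Theorem~\ref{blockbasis}): since $K$ has the shape of $S^1$, the block $N$ is a compact connected surface with boundary homotopy equivalent to $S^1$, hence $N$ is either an annulus or a M\"obius band. Because $u_c=0$, the exit set $N^o$ equals $n^-$ and consists only of circular boundary components; likewise, dualizing, $N^i=n^+$ consists only of circular boundary components, and each boundary circle of $N$ is entirely entrance or entirely exit (no internal tangencies on a regular block with $u_c=0$). Now I would run the local Poincar\'e--Bendixson / smoothing machinery inside $N$: since the flow restricted to $\mathring N$ is topologically equivalent to a $C^1$ flow with no fixed points, every trajectory in $N$ has its $\omega$-limit (or $\omega^*$-limit, working with the appropriate invariant boundary) a closed orbit, and by the standard planar/annular argument (Poincar\'e--Bendixson together with the fact that an annulus or M\"obius strip fixed-point-free flow is a ``flow box around a family of closed orbits'') one concludes that $K$ itself is foliated by closed orbits. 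Combined with $\beta_1(K)=1$ this gives that $K$ is either a single limit cycle (if $N$ is an annulus and both boundary circles converge onto the same orbit, or more precisely if $K$ is one-dimensional), or the whole annulus bounded by two limit cycles (when $N$ is an annulus and both boundary circles lie in $N^-\cup N^+$ with the flow pushing toward $K$ from both sides), or the whole M\"obius band bounded by one limit cycle (when $N$ is a M\"obius band, whose single boundary circle double-covers a limit cycle).

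The main obstacle I anticipate is the last step: ruling out an isolated invariant continuum that is fixed-point-free, has $\beta_1=1$, is non-saddle, but is \emph{not} one of the three listed model pieces --- e.g.\ a continuum that is a genuine proper subcontinuum of the annulus strictly between a closed orbit and something else, or a ``non-manifold'' continuum with the shape of a circle carrying complicated recurrence. Here I would lean on Lemma~\ref{smooth} to replace $\varphi$ near $K$ by a $C^1$ flow, and then on Gutierrez' theorem / the absence of nontrivial minimal sets (a fixed-point-free flow on an annulus or M\"obius band has only trivial minimal sets, namely closed orbits) to upgrade to $C^\infty$ and invoke the classical structure theory of fixed-point-free surface flows on annuli and M\"obius bands, which says precisely that the chain-recurrent / invariant part is a union of closed orbits together with the orbit-closure annuli between them. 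A careful bookkeeping of which boundary circles of the regular block $N$ belong to $n^-$ versus $n^+$ then pins down exactly the three cases, and non-saddleness (already established) guarantees $K$ is ``attracting from one side and repelling from the other'' in the annular case, i.e.\ it is a genuine invariant sub-annulus with boundary limit cycles rather than a set with complicated exit behaviour.
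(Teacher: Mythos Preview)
Your overall route is the same as the paper's: use Proposition~\ref{index} to get $\beta_1(K)+u_c=1$, kill the case $\beta_1(K)=0$ via Corollary~\ref{disk} (a regular block of trivial shape is a disk), deduce $u_c=0$ so $K$ is non-saddle with the shape of a circle, and then argue that a regular block $N$ is an annulus or a M\"obius band and finish with a Poincar\'e--Bendixson argument inside $N$. That matches the paper's proof closely.

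Two points where your write-up should be tightened. First, the claim that ``$K$ itself is foliated by closed orbits'' is not correct and not what you need: in the annulus case the interior orbits of $K$ may spiral between the two boundary limit cycles, so $K$ is not literally a union of closed orbits. What you actually need (and later say) is that the $\omega$- or $\omega^*$-limit of a point on $\partial N$ is a closed orbit in $K$, and that this closed orbit either equals $K$ or bounds a subannulus/M\"obius strip equal to $K$. Second, your M\"obius analysis is too vague and slightly off (the boundary circle of $N$ does not ``double-cover a limit cycle''). The paper handles this case by a concrete trick you might not have anticipated: since $N$ is a M\"obius band with a single boundary circle and $K$ is non-saddle, $K$ is an attractor or repeller; one glues $N$ to a second copy $N^*$ carrying the time-reversed flow along $\partial N$ to obtain a fixed-point-free flow on the Klein bottle, and then invokes the Poincar\'e--Bendixson theorem for the Klein bottle (Demuner et al.) to produce a limit cycle in $K$. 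That cycle cannot bound a disk in $N$ (no fixed points), so it either equals $K$ or bounds a M\"obius strip contained in $\mathring N$, which must then be $K$. For the annulus case the paper embeds $N$ in $\mathbb{R}^2$, smooths, and cites the planar result from \cite{bargeunstable2014}; your appeal to ``classical structure theory'' is the right idea but should be replaced by this explicit embedding-plus-Poincar\'e--Bendixson step.
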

\begin{proof}
Let $N$ be a regular isolating block of $K$. Since $K$ does not have fixed points, applying Proposition~\ref{index} we get
\[
0=i(\varphi,\mathring{N})=1-\beta_1(K)-u_c.
\] 
Hence, $\beta_1(K)+u_c=1$ and we have two possibilities. The first one is that $\beta_1(K)=0$ and $u_c=1$, which must be excluded since $N$ would be a disk \cite{richardson1963} and $K$ would contain a fixed point by Corollary~\ref{disk}. The remaining possibility is $\beta_1(K)=1$ and $u_c=0$. In this case $K$ would be non-saddle and it would have the shape of a circle. Moreover, $\beta_1(N)=1$ being $N$ a regular isolating block. Therefore $N$ is either an annulus or a M\"obius strip depending on its  orientability. Indeed, capping each component of $N$ with a disk we get a closed surface $\widehat{N}$ and
\[
\chi(\widehat{N})=\chi(N)+c,
\]
where $c$ is the number of boundary components of $N$. But, since $\beta_1(N)=1$ and $N$ has non-empty boundary it follows that $\chi(N)=0$ and, hence, $\chi(\widehat{N})=c>0$. If $N$ is orientable, so is $\widehat{N}$ and, hence, $c=2$ and $\widehat{N}$ must be a sphere. Therefore, $N$ is a sphere with two open disks removed, i.e., an annulus, as we wanted to prove.  On the other hand, if $N$ is nonorientable so is $\widehat{N}$ and, as a consequence, $c=1$ and $\widehat{N}$ must be a projective plane. Then, $N$ is a projective plane with an open disk removed, i.e., a M\"obius strip. 
  
 Suppose that $N$ is orientable, i.e., an annulus. Then, $N$ can be embedded in $\mathbb{R}^2$ and, by the arguments presented in the proof of Lemma~\ref{smooth}, we may assume, without loss of generality, that the flow restricted to $\mathring{N}$ can be extended to the whole $\mathbb{R}^2$. Besides, Guti\'errez' Theorem ensures that the extended flow may be assumed to be smooth and the result follows from \cite{bargeunstable2014}. Assume, on the other hand, that $N$ is nonorientable, i.e., a M\"obius strip. Since $K$ is non-saddle and $N$ has only one boundary component it must be either an attractor or a repeller. Consider another copy $N^*$ of $N$ and the flow $\varphi^*=\varphi(\cdot,-t)$ on $M$. We obtain a flow without fixed points on the Klein Bottle by identifying the boundaries of $N$ and $N^*$ via the identity map and considering the flow $\widehat{\varphi}$ which agrees with $\varphi$ in $N$ and with $\varphi^*$ in $N^*$. It is clear that, by its very construction, $\widehat{\varphi}$ extends $\varphi|N$. Now, choosing a point in $\partial N$, either its $\omega$- or its $\omega^*$-limit is a limit cycle contained in $K$  \cite{Demuner}. This limit cycle cannot bound a disk in $N$ since $K$ does not contain fixed points and, as a consequence, it either does not bound any region in $N$ and, hence, it agrees with $K$ or it bounds a M\"obius strip contained in $\mathring{N}$. In this case $K$ must agree with this M\"obius strip and the result follows.
\end{proof}

 The following results are consequences of Theorem~\ref{fixed}.

\begin{corollary}\label{minimal}
Suppose $\varphi$ is a flow defined on a surface and $K$ an isolated invariant continuum which is minimal. Then, $K$ is either a fixed point or a limit cycle.
\end{corollary}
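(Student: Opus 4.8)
The plan is to deduce Corollary~\ref{minimal} directly from Theorem~\ref{fixed}. First I would recall that a minimal set is, in particular, a nonempty compact invariant set with no proper nonempty closed invariant subset; since we are assuming $K$ is an isolated invariant \emph{continuum}, the only hypotheses of Theorem~\ref{fixed} left to check are whether $K$ contains a fixed point. So the argument splits into two cases according to whether $K$ has a fixed point.

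If $K$ contains a fixed point $p$, then $\{p\}$ is a nonempty closed invariant subset of $K$, so minimality forces $K=\{p\}$, i.e.\ $K$ is a fixed point. If $K$ contains no fixed point, then Theorem~\ref{fixed} applies and tells us that $K$ is either a limit cycle, a closed annulus bounded by two limit cycles, or a M\"obius strip bounded by a limit cycle. In the latter two cases $K$ properly contains a limit cycle (one of its boundary components), which is a nonempty closed invariant proper subset of $K$, contradicting minimality. Hence the only surviving possibility is that $K$ is a limit cycle, which is indeed minimal. Combining the two cases yields the statement.

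The only point requiring a word of care — and the mild obstacle — is making sure that in the annulus and M\"obius cases the boundary limit cycle is genuinely a \emph{proper} subset of $K$, i.e.\ that $K$ is not itself a single circle that happens to be described as the boundary; but this is immediate since an annulus or a M\"obius strip is two-dimensional while a limit cycle is one-dimensional, so they cannot coincide. One should also note that a limit cycle, being a periodic orbit, has no proper nonempty closed invariant subset, so it really is minimal; thus the classification in Corollary~\ref{minimal} is sharp. No further machinery beyond Theorem~\ref{fixed} is needed.
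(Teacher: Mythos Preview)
Your proof is correct and follows exactly the approach intended by the paper, which states the corollary as an immediate consequence of Theorem~\ref{fixed} without further argument. Your case split and the use of minimality to rule out the annulus and M\"obius strip alternatives spell out precisely what the paper leaves implicit.
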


\begin{remark}
If $M$ is compact and the flow is $C^2$, Corollary~\ref{minimal} holds even if we drop the assumption about the isolation as it has been seen in \cite{Schwartz}. 
\end{remark}

\begin{corollary}
If $\varphi$ is a flow defined on a compact surface and every minimal set of $\varphi$ is isolated, then $\varphi$ is topologically equivalent to a $C^\infty$ flow.
\end{corollary}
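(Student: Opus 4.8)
The plan is to verify the hypothesis appearing in Guti\'errez' Theorem --- that every minimal set of $\varphi$ is trivial (a fixed point, a closed trajectory, or all of $M$ in the case $M=S^1\times S^1$ with $\varphi$ topologically equivalent to an irrational flow) --- after which Guti\'errez' Theorem, via the equivalence of its three listed conditions, immediately furnishes a topologically equivalent $C^\infty$ flow. I would first record that every minimal set $K$ of $\varphi$ is a continuum: it is compact and invariant by definition, and connected because it is the closure of the orbit of any of its points, which is a continuous image of $\mathbb{R}$. So the task reduces to classifying the minimal sets.

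If $K$ is a minimal set with $K\subsetneq M$, then $K$ is an isolated invariant continuum by hypothesis, so Corollary~\ref{minimal} applies directly and gives that $K$ is a fixed point or a limit cycle --- in either case a trivial minimal set. The only remaining possibility is $K=M$, i.e. $\varphi$ is itself a minimal flow on the compact surface $M$, and this is the case I expect to require some care, since Corollary~\ref{minimal} is stated only for proper subcontinua of $M$.

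To handle it I would argue as follows. A minimal flow on $M$ (which has more than one point) has no fixed points, a fixed point being a minimal set properly contained in $M$. Passing via Guti\'errez' Theorem to a topologically equivalent $C^1$ flow --- still minimal and still non-singular, topological equivalence preserving both properties --- the Poincar\'e--Hopf theorem forces $\chi(M)=0$, so $M$ is the torus or the Klein bottle. Since the Klein bottle carries no minimal flow (every flow on it has a fixed point or a periodic orbit), we obtain $M=S^1\times S^1$; and a non-singular minimal flow on the torus admits a global cross section --- by minimality every orbit meets any given closed transversal --- whose Poincar\'e return map is a minimal circle homeomorphism, hence topologically conjugate to an irrational rotation. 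Thus $\varphi$ is topologically equivalent to an irrational linear flow on $S^1\times S^1$, which is by definition a trivial minimal set.

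Having shown that every minimal set of $\varphi$ is trivial, Guti\'errez' Theorem gives the conclusion. The proper-subset case is immediate from Corollary~\ref{minimal}; the genuine obstacle is the case in which $M$ is itself minimal, which lies outside the scope of Corollary~\ref{minimal} and where one must invoke the classical structure theory of minimal flows on compact surfaces --- vanishing Euler characteristic, non-existence of minimal flows on the Klein bottle, and Poincar\'e's theorem that a minimal circle homeomorphism is conjugate to a rotation.
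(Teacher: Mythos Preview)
Your argument is correct and follows the same route as the paper's proof, which reads in its entirety: ``It readily follows from Corollary~\ref{minimal} and Guti\'errez' Theorem.'' You use exactly these two ingredients for the case $K\subsetneq M$.

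Where you go beyond the paper is in explicitly treating the case $K=M$. This is a legitimate concern: the paper's standing convention is that an isolated invariant continuum is a \emph{proper} subset of $M$, so Corollary~\ref{minimal} does not literally cover a minimal flow on all of $M$, and the paper's one-line proof simply does not address this. Your reduction (no fixed points $\Rightarrow$ $\chi(M)=0$ via Poincar\'e--Hopf after passing to a $C^1$ model; exclusion of the Klein bottle by Markley/Kneser; global transversal and Poincar\'e's theorem on minimal circle maps to identify the flow as an irrational flow on the torus) is the standard way to close this gap and is correct. So your proof is not a different approach but rather a more careful execution of the paper's intended one, supplying the classical surface-dynamics facts that the paper tacitly assumes.
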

\begin{proof}
It readily follows from Corollary~\ref{minimal} and Guti\'errez' Theorem.
\end{proof}

\begin{center}
\textbf{Acknowledgements}
\end{center}
\vspace{.15cm}

The author would like to express his gratitude to L. Hern\'andez-Corbato, Jaime J. S\'{a}%
nchez-Gabites and Jos\'e M.R. Sanjurjo for useful comments, inspiring conversations and support during the elaboration of this manuscript. I also would like to thank the referees for their useful suggestions which have helped to improve this manuscript

\bibliographystyle{amsplain}
\bibliography{libreria1}

\end{document}